\documentclass[12pt,oneside,reqno]{amsart}
\linespread{1.2}
\usepackage{graphicx}
\usepackage{amssymb}
\usepackage{epstopdf}
\usepackage{amsthm}
\usepackage{amsmath}
\usepackage{latexsym}
\usepackage[usenames,dvipsnames]{color} 
\usepackage{tikz,amsmath,amsfonts}
\usetikzlibrary{decorations.pathreplacing,angles,quotes}
\usepackage{ifthen}
\usepackage{hyperref}
\usepackage{mathtools}
\usepackage{color}
\usepackage[usenames,dvipsnames]{color}

\usepackage[margin=1in]{geometry}

\newcommand{\Z}{\mathbb{Z}}
\newcommand{\SL}{\text{SL}}

\setlength{\parindent}{0pt}

\newcommand{\e}{\text{e}}

\renewcommand{\mod}[1]{\text{ (mod } #1)}

\newcommand{\Real}{\text{Re}}
\newcommand{\Imag}{\text{Im}}

\newtheorem{theorem}{Theorem}[section]
\newtheorem{lem}[theorem]{Lemma}

\newtheorem{cor}[theorem]{Corollary}

\newtheorem{prop}[theorem]{Proposition}

\renewcommand{\exp}{\text{exp}}

\begin{document}

\title{Zeros of Newform Eisenstein Series on $\Gamma_0(N)$}
\author{Thomas Brazelton, Victoria Jakicic}

\begin{abstract}
\noindent We examine the zeros of newform Eisenstein series $E_{\chi_1,\chi_2,k}(z)$ of weight $k$ on $\Gamma_0(q_1 q_2)$, where $\chi_1$ and $\chi_2$ are primitive characters modulo $q_1$ and $q_2$, respectively. We determine the location and distribution of a significant fraction of the zeros of these Eisenstein series for $k$ sufficiently large.
\end{abstract}

\maketitle
\section{Introduction}
\subsection{Statement of Results}
The zeros of the classical Eisenstein series $E_k$ for $k\geq 4$ were studied in [RS-D], where it was shown that when $E_k$ is restricted to the standard fundamental domain $\mathcal{F}$, its zeros rest entirely on the boundary $|z|=1$. By contrast, the zeros of weight $k$ Hecke cusp forms equidistribute in $\mathcal{F}$ by [R] and [HS]. For cusp forms as the level tends to infinity, although Quantum Unique Ergodicity is known (see [N], [NPS]), the corresponding equidistribution of zeros is unknown. In this paper we study the zeros of newform Eisenstein series where both the weight and level may vary.\\
	
Let $\chi_1$ and $\chi_2$ be primitive characters modulo $q_1$ and $q_2$, respectively, with $q_1,q_2 > 1$. We consider newform Eisenstein series with nebentypus $\chi_1\overline{\chi_2}$ on the congruence subgroup $\Gamma_0(q_1 q_2)$ of $\SL_2(\Z)$. These Eisenstein series are defined by
\begin{align}
E_{\chi_1,\chi_2,k}(z)=\frac{1}{2}\sum_{(c,d)=1}\frac{\chi_1(c)\chi_2(d)}{(c q_2 z+d)^k},\label{czd-expansion}
\end{align}
where, in order to avoid triviality, we assume that
$$\chi_1(-1)\chi_2(-1)=(-1)^k,\qquad k\geq 3.$$
These series have a Fourier expansion given in [DS, Theorem 4.5.1] as
\begin{align}
E_{\chi_1,\chi_2,k}(z) = e(\chi_1,\chi_2,k)\sum_{n=1}^\infty \Bigg( \sum_{ab=n}\chi_1(a)\overline{\chi_2}(b)b^{k-1} \Bigg) \e( nz ),\label{fourier-expansion}
\end{align}
where $e(\chi_1,\chi_2,k)$ is some constant independent of $z$ whose value does not affect the location of zeros.\\
	
In this paper, we determine the location of a distinguished subset of zeros of $E_{\chi_1,\chi_2,k}(z)$.\\

\begin{figure}
	\begin{center}
	\includegraphics[width=0.5\linewidth]{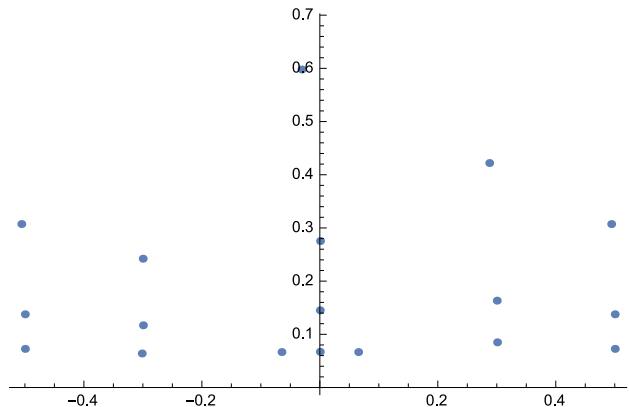}
	\caption{An example for $q_1=3$, $q_2=5$, $k=10$.}	
	\label{fig:czd}
	\end{center}
\end{figure}

We may see a specific example of the vanishing of $E_{\chi_1,\chi_2,k}(x+iy)$ for $-\frac{1}{2}\leq x\leq \frac{1}{2}$ and $\frac{1}{10\sqrt{3}} < y< \frac{\sqrt{10}}{5}$. In Figure \ref{fig:czd}, we have that $\chi_1$ is the Legendre symbol modulo $q_1=3$, $\chi_2$ is the unique character modulo 5 such that $\chi_2(2)=i$, and $k=8$. In this figure and in other similar computations, we notice some approximate vertical lines of zeros, which motivates the following theorem.

\begin{theorem}\label{thm1} Let $a\in\Z$ be such that $\gcd(a,q_2)=\gcd(a+1,q_2)=1$. Then $E_{\chi_1,\chi_2,k}(z)$ has $m$ zeros which are each within $O\left(\frac{1}{q_2 k}\right)$ of the line $x=\frac{a+1/2}{q_2}$. We have that $m$ satisfies
$$m =  \frac{k}{3} - O(\sqrt{k}),$$
with an absolute implied constant.
\end{theorem}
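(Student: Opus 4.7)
The plan is to isolate a main term from (\ref{czd-expansion}) that captures the dominant oscillation on the line $\mathrm{Re}\,z = (a+1/2)/q_2$, compute its zeros explicitly, and then apply Rouch\'e's theorem to transfer these zeros to $E_{\chi_1,\chi_2,k}$.

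On $z = (a+1/2)/q_2 + iy$ one has $|cq_2 z + d|^2 = (c(a+\tfrac{1}{2}) + d)^2 + c^2 q_2^2 y^2$, and the four coprime pairs $(c,d)\in\{(\pm 1, -a), (\pm 1, -a-1)\}$ jointly minimize this quantity; the gcd hypotheses on $a$ ensure that $\chi_2(-a), \chi_2(-a-1) \neq 0$. Collapsing the $c = \pm 1$ pairs via $\chi_1(-1)\chi_2(-1) = (-1)^k$ produces the candidate main term
$$ M(z) \;:=\; \chi_2(-a)\,(q_2 z - a)^{-k} \;+\; \chi_2(-a-1)\,(q_2 z - a - 1)^{-k}. $$
Setting $M(z) = 0$ yields $\bigl((q_2 z - a - 1)/(q_2 z - a)\bigr)^k = -\chi_2(a+1)/\chi_2(a)$, whose $k$ roots are parametrized by $k$th roots $w = e^{i\theta}$ of a unit complex number; solving for $z$ gives
$$ z \;=\; \frac{a+1/2}{q_2} \;+\; \frac{i}{2q_2}\cot(\theta/2), $$
placing \emph{every} zero of $M$ on the desired vertical line, with parameter spacing $2\pi/k$ in $\theta$.

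Next I would control the tail $T(z) := E_{\chi_1,\chi_2,k}(z) - M(z)$. The two principal competitors are (i) the pair $(c,d) = (\pm 2, \mp(2a+1))$ with magnitude $(2q_2 y)^{-k}$, which dominates the typical size of $|M|$ precisely when $q_2 y \leq 1/(2\sqrt 3)$ (equivalently $\theta \geq 2\pi/3$); and (ii) the remaining $c = \pm 1$ terms, whose \emph{full} sum $F_1 := \sum_d \chi_2(d)(q_2 z + d)^{-k}$ has an exponentially decaying Fourier expansion (with leading constant of order $|e(\chi_1,\chi_2,k)|$), so that at a zero $z_0$ of $M$ the value $|F_1(z_0) - M(z_0)| = |F_1(z_0)| \lesssim |e(\chi_1,\chi_2,k)|\, e^{-2\pi y_0}$ is far smaller than the naive absolute-value estimate on the $d$-sum suggests. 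Comparing with Stirling's formula for $|e(\chi_1,\chi_2,k)|$ shows that the approximation $E \approx M$ persists through $q_2 y = O(\sqrt k)$, equivalently $\theta \geq C/\sqrt k$, so that the admissible range for the parameter is $\theta \in (C/\sqrt{k},\, 2\pi/3)$, of length $2\pi/3 - O(1/\sqrt k)$.

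Finally, one computes $|M'(z_0)| = kq_2/r^{k+2}$ at each zero, with $r := |q_2 z_0 - a|$, so that on a disk of radius $\rho = O(1/(kq_2))$ one has $|M(z)| \gtrsim 1/r^{k+2}$ on the boundary, exceeding the tail bound from the previous step; Rouch\'e then produces exactly one zero of $E_{\chi_1,\chi_2,k}$ in each such disk. Combined with the uniform $\theta$-spacing of zeros of $M$, this yields the count $m = k/3 - O(\sqrt k)$. The main obstacle is establishing the Fourier-cancellation bound $|F_1(z_0)| \lesssim |e|\, e^{-2\pi y_0}$ uniformly in $y_0$, together with the analogous bounds for $|c|\geq 2$ blocks, near the upper edge of the good $\theta$-range where $r \sim \sqrt{k}$; it is this boundary-layer analysis that forces the $O(\sqrt k)$ loss in the count.
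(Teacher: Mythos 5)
Your proposal follows essentially the same route as the paper: the same two-term main term from the $cz+d$ expansion, the same explicit parametrization of its zeros along $x=\frac{a+1/2}{q_2}$, a Rouch\'e comparison on small neighborhoods of each zero, and the same count $\frac{k}{\pi}\cdot\frac{\pi}{3}-O(\sqrt{k})$ coming from the admissible angular range between $q_2y\approx\frac{1}{2\sqrt{3}}$ and $q_2y\approx\sqrt{k}$. The only substantive difference is that your proposed Fourier-cancellation bound on the full $c=\pm1$ block is unnecessary: the trivial term-by-term estimate (as in the paper, via [RVY]) already gives $|E-g_a|\ll (2q_2y)^{-k}+q_2y\left(\frac{9}{4}+q_2^2y^2\right)^{-k/2}$, which beats the lower bound $|g_a|\gg\left(\frac{1}{4}+q_2^2y^2\right)^{-k/2-1}$ throughout $q_2y=o(\sqrt{k})$, so the ``main obstacle'' you flag dissolves.
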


With extra work, one could derive explicit constants for the above theorem. We note that once $k$ is sufficiently large, then one is free to vary $q_1$ and $q_2$ and the results are uniform in these parameters.\\

Additionally, we will demonstrate that the zeros found in Theorem \ref{thm1}, for a fixed integer $a$, are equidistributed with respect to a certain angle $\theta$ defined in \eqref{polar} as $k$ tends to infinity. Furthermore, we see in Section \ref{ineqsection} that if $q_1>3$, these zeros are $\Gamma_0(q_1q_2)$-inequivalent.\\

Theorem \ref{thm1} is approached using the $cz+d$ expansion from \eqref{czd-expansion} for $\frac{1}{2\sqrt{3}q_2} \ll \Imag(z) \ll \frac{\sqrt{k}}{q_2}$.\\

In a complementary range where $\text{Im}(z) \gg \sqrt{k}$, we use the Fourier expansion to approximate $E_{\chi_1,\chi_2,k}(z)$, which is motivated by the ideas of [GS]. Taking the $n=\ell$ and $n=\ell+1$ terms of the Fourier expansion gives a good approximation to $E_{\chi_1,\chi_2,k}(z)$ for $y=\Imag(z)$ in the following range:
\begin{equation*}
    \frac{k-1}{2\pi (\ell + 1)} =: y_{\ell + 1} \leq y \leq y_{\ell} := \frac{k-1}{2 \pi \ell}.
\end{equation*}

\begin{theorem}\label{thm:1.2}
Let $\ell$ be a natural number with $(\ell,q_2)=(\ell+1,q_2)=1$ and $\ell\leq\epsilon\sqrt{k}$ for a sufficiently small $\epsilon>0$ and sufficiently large $k$. Then $E_{\chi_1,\chi_2,k}(z)$ has exactly one zero for $-\frac{1}{2} < x \leq \frac{1}{2}$ and $y_{\ell + 1} \leq y \leq y_{\ell}$.
\end{theorem}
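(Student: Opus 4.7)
The plan is to change variables $w=\e(z)$ so that $E_{\chi_1,\chi_2,k}(z)/e(\chi_1,\chi_2,k)=\sum_{n\geq 1}c_n w^n$ with $c_n:=\sum_{ab=n}\chi_1(a)\overline{\chi_2}(b)b^{k-1}$, and the fundamental rectangle maps bijectively to the closed annulus $A:=\{w:r_1\leq |w|\leq r_2\}$ with $r_1=e^{-(k-1)/\ell}$ and $r_2=e^{-(k-1)/(\ell+1)}$. Split the power series into a two-term saddle piece and a remainder,
$$F(w):=c_\ell w^\ell+c_{\ell+1}w^{\ell+1},\qquad G(w):=\sum_{n\neq\ell,\ell+1}c_n w^n.$$
Rouch\'e's theorem on the annulus will reduce the problem to counting zeros of $F$.

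I first locate the zeros of $F$. Since $F(w)=w^\ell(c_\ell+c_{\ell+1}w)$, its $\ell$-fold zero at the origin sits inside the inner boundary of $A$, and the only candidate zero in $A$ is $w_0=-c_\ell/c_{\ell+1}$. The coprimality hypotheses $(\ell,q_2)=(\ell+1,q_2)=1$ ensure $c_\ell,c_{\ell+1}\neq 0$ since isolating the leading divisor $b=n$ yields $c_n=\overline{\chi_2}(n)n^{k-1}+O(\tau(n)(n/2)^{k-1})$, hence $|c_\ell|/|c_{\ell+1}|=(\ell/(\ell+1))^{k-1}(1+O(2^{-k}))$. Using $1/(\ell+1)<\log(1+1/\ell)<1/\ell$, one has $r_1<|w_0|<r_2$ strictly for $k$ large; in particular $F$ has exactly one zero in $A$ and no zero on $\partial A$.

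Next I estimate both functions on $\partial A$. On $|w|=r_1$, factor $F(w)=c_\ell w^\ell(1+(c_{\ell+1}/c_\ell)w)$; the perturbation has modulus $\approx((\ell+1)/\ell)^{k-1}e^{-(k-1)/\ell}=\exp(-(k-1)/(2\ell^2)+O(k/\ell^3))$, which under $\ell\leq\epsilon\sqrt{k}$ is bounded by $\exp(-1/(3\epsilon^2))<1/2$ for $\epsilon$ sufficiently small, giving $|F(w)|\geq \tfrac{1}{2}|c_\ell|r_1^\ell$. For $|G|$, I use $|c_n|\leq 2n^{k-1}$ and compare $n^{k-1}r_1^n$ to its peak at $n=\ell$; since $\log r_1=-(k-1)/\ell$, the log-ratio equals $(k-1)(\log(n/\ell)-(n-\ell)/\ell)$, a concave function of $n$ with maximum $0$ at $n=\ell$ and curvature $-(k-1)/\ell^2$. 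A Gaussian Taylor bound handles $|n-\ell|\leq \ell/2$, and direct concavity handles the longer tail, yielding relative size $\leq\exp(-(k-1)/(2\ell^2))$ for the largest non-main term $n=\ell-1$ and geometric decay beyond. Summing gives $|G(w)|/|F(w)|\ll\exp(-1/(2\epsilon^2))<1$ for $\epsilon$ small. The outer circle $|w|=r_2$ is treated symmetrically via $F(w)=c_{\ell+1}w^{\ell+1}(1+(c_\ell/c_{\ell+1})w^{-1})$.

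Rouch\'e's theorem on $A$ now shows that $F+G$ has the same number of zeros in $A$ as $F$, namely one, and pulling back along $w=\e(z)$ proves the theorem. The main obstacle is the uniform Fourier tail bound in the third step: one must simultaneously control the near-saddle Gaussian regime and the long tails (both $n\to 0$ and $n\to\infty$), and the hypothesis $\ell\leq\epsilon\sqrt{k}$ is precisely what forces the near-neighbor ratio $e^{-k/(2\ell^2)}$ to dominate the implicit constants so that only the two Fourier modes $n=\ell,\ell+1$ survive.
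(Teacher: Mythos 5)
Your proposal is correct and rests on the same core strategy as the paper: isolate the two Fourier modes $n=\ell,\ell+1$ as the main term and apply Rouch\'e on the strip $y_{\ell+1}\leq y\leq y_\ell$. The implementation differs in two ways worth noting. First, by passing to $w=\e(z)$ you replace the rectangle $V_\ell$ by a closed annulus, so the boundary consists only of the two circles $|w|=r_1,r_2$ (the left and right edges of the rectangle are glued by periodicity). The paper instead works on the rectangle directly, and the hard case there is precisely the left and right edges $x=x_0\pm\tfrac12$: on those edges $|h_\ell|$ dips to its minimum at $y_0$ where the two terms have equal magnitude, which forces the introduction of the function $M(t)$ and the inequality $0<M(t)<R(t)<t^2$ of Lemma \ref{lem:Functions} to verify that this minimum still dominates the error. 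Your annulus formulation sidesteps that entire comparison, since on each boundary circle one of the two terms of $F$ dominates outright; you only need the near-neighbor ratio $\exp(-ck/\ell^2)$, which is exactly where the hypothesis $\ell\leq\epsilon\sqrt{k}$ enters in both arguments. (One should just say a word justifying Rouch\'e on a non-simply-connected domain, e.g.\ by applying it on the two disks $|w|\leq r_1$ and $|w|\leq r_2$ and subtracting, the $\ell$-fold zero at the origin cancelling.) Second, you bound the tail $G$ by elementary term-by-term comparison against the peak $n^{k-1}r^n\big|_{n=\ell}$ using concavity of $(k-1)\log(n/\ell)-(n-\ell)(k-1)/\ell$, where the paper invokes incomplete gamma function asymptotics from [T] via [RVY]; these are equivalent in strength here, and yours is more self-contained. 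You also fold the non-leading divisors into the coefficients $c_n=\overline{\chi_2}(n)n^{k-1}(1+O(2^{-k}))$ rather than splitting off a separate $\delta(z)$ as the paper does; this is harmless and slightly cleaner. Minor quibble: your expansion $\exp(-(k-1)/(2\ell^2)+O(k/\ell^3))$ for the perturbation on $|w|=r_1$ has an error term that is not small for bounded $\ell$, but the needed conclusion (the perturbation is at most $\exp(-c/\epsilon^2)<\tfrac12$) follows from the uniform bound $t-\log(1+t)\geq t^2/4$ on $0<t\leq 1$, so nothing breaks.
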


We note that this result is also uniform in $q_1$, $q_2$, and $k$.\\

Due to the constraints of these expansions, we are unable to locate zeros where $\frac{\sqrt{k}}{q_2}\ll y\ll \sqrt{k}$. Note that Theorem \ref{thm1} provides the location of roughly $\gg\varphi(q_2)k$ zeros and Theorem \ref{thm:1.2} provides roughly $\sqrt{k}$ zeros. These zeros are produced in a neighborhood around infinity. In Section \ref{AL}, we study $E_{\chi_1,\chi_2,k}(z)$ near Atkin-Lehner cusps in order to find additional zeros.

\subsection{Heuristic Discussion on Equidistribution}
One of our primary motivations for this work is gathering evidence as to whether the zeros of newform Eisenstein series equidistribute as the level becomes large. A natural way to define equidistribution of a discrete set of points in $\Gamma_0(N)\backslash\mathbb{H}$ is if the points equidistribute in $\Gamma_0(1)\backslash\mathbb{H}$ after application of the projection map
\begin{align*}
\pi:\Gamma_0(N)\backslash\mathbb{H}\to\Gamma_0(1)\backslash\mathbb{H}.
\end{align*}

As an example, Figure \ref{fig:PartialEquid} shows the image of the zeros from Figure \ref{fig:czd} under the map $\pi$.

\begin{figure}[h!]
	\begin{center}
	\includegraphics[width=0.4\linewidth]{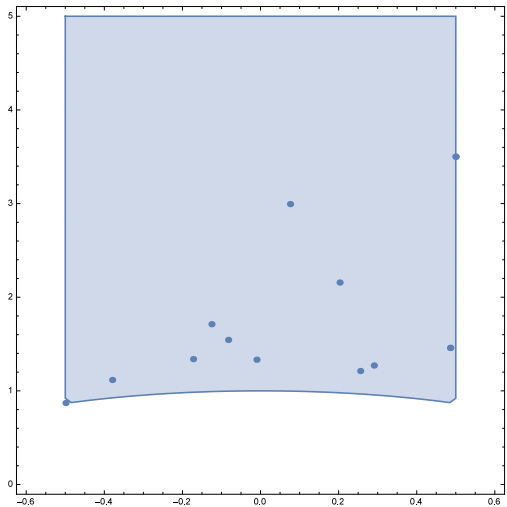}
	\caption{$q_1 = 3$, $q_2 = 7$, $k = 8$}	
	\label{fig:PartialEquid}
	\end{center}
\end{figure}

We note the stark contrast between these zeros, which lie on the interior of $\mathcal{F}$, and the zeros found in [RS-D], which lie entirely on the bottom arc $|z|=1$. Taking $q_2$ to be an odd prime, we see from Theorem \ref{thm1} that we have $q_2-2$ approximate vertical lines of zeros of $E_{\chi_1,\chi_2,k}(z)$ with real part strictly between 0 and 1. Consider the lowest zero from each of these lines, which is within $O\left(\frac{1}{q_2 k}\right)$ of $\omega_a:=\frac{a+1/2}{q_2}+\frac{i}{2\sqrt{3}q_2}$ by Theorem \ref{thm1}. We claim that $\Gamma_0(1)\{\omega_a:1\leq a\leq q_2-2\}$ is a subset of the set of Hecke points, $T_{p}(z)$, for $z=\frac{1}{2}+\frac{i}{2\sqrt{3}}$, and $p=q_2$. For $z\in\mathbb{H}$ and $p$ a prime, the set of Hecke points is defined as the $\Gamma_0(1)$-orbits of the points in the set $\left\{\frac{z+a}{p}:a\mod{p}\right\}\cup\{pz\}$. Note that the set of $\omega_a$'s is missing only three points from $T_p(z)$.\\

It is known that Hecke points $T_p(z)$ equidistribute in $\Gamma_0(1)\backslash\mathbb{H}$ as $p\to\infty$ for $z$ fixed; we refer to [MV] Section 1.2 for discussion on the necessary equidistribution results. If we imagine that our low-lying zeros are modeled by a random perturbation of Hecke points, it is reasonable to believe that they equidistribute in $\Gamma_0(1)\backslash\mathbb{H}$ as $q_2\to\infty$. This discussion gives some heuristic evidence for why the zeros displayed in Figure \ref{fig:PartialEquid} hint at equidistribution.

\section{Outline of the Approach}
In order to determine where the zeros of $E_{\chi_1,\chi_2,k}(z)$ lie, we will first distinguish small regions where the Eisenstein series is well approximated by very few terms. In Section \ref{Tsection}, we will look at two terms from the $cz+d$ expansion, evaluated on a thin vertical strip. In Section \ref{Vsection}, we will look at two terms from the Fourier expansion, which are evaluated in a strip for $y_{\ell + 1} \leq y \leq y_{\ell}$.\\

These regions will be selected so that our main term is sufficiently large along its boundary, and the main term contains one zero within the region. We will then use Rouch\'e's Theorem to demonstrate that the Eisenstein series also has a zero within the designated region. This theorem is restated for the reader's convenience.

\begin{theorem}[\textbf{Rouch\'e}] Let $E$ and $g$ be two complex-valued functions which are holomorphic on a closed region $K$ with rectangular boundary $\partial K$. If the strict inequality holds:
\begin{align*}
|E(z)-g(z)|<|E(z)|+|g(z)|,
\end{align*}
for all  $z\in\partial K$, then $E$ and $g$ have the same number of zeros (including multiplicity) in the interior of $K$.
\end{theorem}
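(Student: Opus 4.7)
The plan is to reduce the hypothesis to a geometric statement about the image of the ratio $h(z):=E(z)/g(z)$ on $\partial K$, and then invoke the argument principle. First I would observe that neither $E$ nor $g$ can vanish on $\partial K$: at any such zero the strict inequality $|E-g|<|E|+|g|$ would collapse to an equality (for example, if $E(z_0)=0$, both sides equal $|g(z_0)|$). Hence $h$ is a well-defined continuous nonvanishing function on $\partial K$, and meromorphic in a neighborhood of $K$.

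Next I would unpack the hypothesis geometrically. A routine calculation---squaring both sides and simplifying---shows that the inequality $|E-g|<|E|+|g|$ is equivalent to the condition $E(z)/g(z)\notin(-\infty,0]$. Thus $h(\partial K)$ lies in the slit plane $\Omega:=\mathbb{C}\setminus(-\infty,0]$, which is simply connected.

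Because $\Omega$ is simply connected, a continuous branch of $\log h$ exists along $\partial K$, so $\arg h(z)$ returns to its initial value as $z$ traces $\partial K$ once; the winding number of the closed curve $h(\partial K)$ about the origin is therefore zero. By the argument principle applied to the meromorphic function $h=E/g$ on $K$, this winding number equals $N_E(K)-N_g(K)$, where $N_f(K)$ denotes the number of zeros of $f$ in the interior of $K$ counted with multiplicity. We conclude $N_E(K)=N_g(K)$. The only subtle step in this program is the algebraic translation of the strict inequality into the slit-plane condition on the ratio; after that, the conclusion is automatic, and the rectangular shape of $\partial K$ plays no essential role beyond being a piecewise-smooth simple closed curve bounding $K$.
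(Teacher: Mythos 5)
Your proof is correct. This is the standard Estermann--Glicksberg argument for the symmetric form of Rouch\'e's theorem: the strict inequality forces $E/g$ to avoid $(-\infty,0]$ on $\partial K$, so the boundary curve of $E/g$ has winding number zero about the origin, and the argument principle gives $N_E=N_g$. The paper states this theorem without proof (it is quoted as a known result), so there is nothing to compare against; your one flagged subtlety, the equivalence of $|E-g|<|E|+|g|$ with $E/g\notin(-\infty,0]$, is indeed the crux and checks out, since equality in $|E-g|\leq|E|+|g|$ holds precisely when $E\overline{g}$ is a nonpositive real number.
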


\section{Zeros in the Region $\frac{1}{2\sqrt{3}}\ll q_2\Imag(z)\ll \sqrt{k}$}\label{Tsection}

\subsection{The Main Terms of $E_{\chi_1,\chi_2,k}(z)$ Along the Line $x=\frac{a+1/2}{q_2}$}
We fix an integer $a$ such that $\gcd(a,q_2)=\gcd(a+1,q_2)=1$. In a small region around the line $x=\frac{a+1/2}{q_2}$, we will see that the terms in the $cz+d$ expansion where $c=1$ and $d=-a,-a-1$ are a good approximation to $E_{\chi_1,\chi_2,k}(z)$. We denote these terms as
$$g_a(z):=\frac{\chi_2(-a)}{(q_2 z-a)^k} + \frac{\chi_2(-a-1)}{(q_2 z-a-1)^k}.$$

We first wish to determine where our main term $g_a(z)$ has roots in a small region around the vertical line $x=\frac{a+\frac{1}{2}}{q_2}$. Observe that, in order to have $g_a(z)=0$, we must have that the magnitudes of the two terms are equal. That is,
\begin{align*}
\frac{1}{|q_2 z-a|^k} = \frac{1}{|q_2z-a-1|^k},
\end{align*}
which implies that $x=\frac{a+1/2}{q_2}$. Along this vertical line, we make a substitution to polar coordinates given by
\begin{align}
z=\frac{a+1/2}{q_2}+iy=\frac{a}{q_2}+\frac{R}{q_2}e^{i\theta}\label{polar}.
\end{align}

These coordinates will be used frequently and, for the reader's convenience, are illustrated in Figure \ref{fig:coordinates}. With this substitution, we see that 

\begin{align*}
q_2 z-a &=\frac{1}{2}+iq_2y = Re^{i\theta}\\
q_2 z-a-1&=-\frac{1}{2}+iq_2 y = -Re^{-i\theta}.
\end{align*}

Therefore along $x=\frac{a+1/2}{q_2}$, we may see that $g_a(z)=0$ reduces to
\begin{align*}
0&=\frac{\chi_2(-a)}{(q_2z-a)^k} + \frac{\chi_2(-a-1)}{(q_2z-a-1)^k}=\frac{\chi_2(-a) e^{-i\theta k} + (-1)^k\chi_2(-a-1) e^{i\theta k}}{R^k}.
\end{align*}

This is satisfied if and only if
\begin{align}
e^{2i\theta k} + (-1)^k\chi_2(a)\overline{\chi_2(a+1)} = 0\label{charactersumzero}.
\end{align}
We conclude that the zeros of $g_a(z)$ along the vertical line $x=\frac{a+1/2}{q_2}$ depend only on their angle from the ray emerging from $\frac{a}{q_2}$. It now suffices to show that as $k$ is sufficiently large, both $g_a(z)$ and $E_{\chi_1,\chi_2,k}(z)$ have the same number of zeros in small regions surrounding each zero of $g_a(z)$ along this vertical line.

\subsection{Defining the Regions Containing Zeros}

We fix a positive $\epsilon$ which is sufficiently small, and independent of $q_1$, $q_2$, and $k$. Let $K_{\epsilon}$ denote a small region around our vertical line:
$$K_{\epsilon}=\Bigg\{z=x+iy:\frac{a+\frac{1}{2}-\frac{\epsilon}{k}}{q_2}\leq x\leq \frac{a+\frac{1}{2}+\frac{\epsilon}{k}}{q_2},\ \frac{1}{2\sqrt{3}q_2}<y< \frac{c\sqrt{k}}{q_2} \Bigg\},$$
where $c$ is some small absolute constant. A view of $K_\epsilon$ is displayed as the shaded region in Figure \ref{fig:coordinates}.\\

\begin{figure}[h!]
\begin{center}
\begin{tikzpicture}
\fill[gray!20,nearly transparent] (2.5,0.5)--(3.5,0.5)--(3.5,6)--(2.5,6)--cycle;
\draw (0,0) -- (6,0) -- (6,6) -- (0,6) -- (0,0);
\draw (3,0)--(3,6);
\draw (2.5,0)--(2.5,6);
\draw (3.5,0)--(3.5,6);
\draw (0,0)--(6,2.4);
\draw (0,0)--(6,3.6);
\draw [dashed] (0,1)--(6,1);
\node [below] at (0,0) {$\frac{a}{q_2}$};
\node [below] at (6,0) {$\frac{a+1}{q_2}$};
\node [below] at (3,0) {$\frac{a+\frac{1}{2}}{q_2}$};
\node [above left] at (3.1,1.9) {$z_2$};
\node [below right] at (2.9,1) {$z_1$};
\node [above] at (3.3,6.2) {$\frac{\epsilon}{q_2k}$};
\node [above right] at (1,-0.08) {$\theta_1$};
\node[right] at (6,1.1) {$y=\frac{1}{2\sqrt{3}q_2}+\frac{\eta}{q_2 k}$};
\draw[line width=1.1pt] (2.5,1.2)--(3.5,1.2)--(3.5,1.9)--(2.5,1.9)--cycle;
\draw [fill=white] (3,1.2) circle[radius= 0.2 em];
\draw [fill=white] (3,1.9) circle[radius= 0.2 em];
\draw [decoration={brace,raise=3pt},decorate] (3,6)--(3.5,6);
\node[left] at (2.5,1.8) {$W_1$};
\draw [black,thick,domain=0:22] plot ({cos(\x)}, {sin(\x)});
\filldraw (0,0) circle[radius=1.5pt];
\filldraw (3,1.5) circle[radius=1.5pt];
\draw [dashed] (0,0.5)--(6,0.5);
\node[right] at (6,0.4) {$y=\frac{1}{2\sqrt{3}q_2}$};

\end{tikzpicture}\end{center}
\caption{Coordinates around $x=\frac{a+1/2}{q_2}$}\label{fig:coordinates}
\end{figure}

We will prove that the conditions hold to apply Rouch\'e's Theorem for $g_{a}(z)$ and $E_{\chi_1,\chi_2,k}(z)$ on the boundaries of a series of regions $W_n\subset K_\epsilon$. Each region $W_n$ will be chosen such that it contains exactly one zero of $g_a(z)$, and $|g_a(z)|$ is large on $\partial W_n$.\\

We pick $\theta_1$ to be the smallest positive real number such that the following conditions hold
\begin{align}
\big|e^{2ik\theta_1} +(-1)^k\chi_2(a)\overline{\chi_2(a+1)}\big| =2\label{angle2},
\end{align}
and 
\begin{align}
\tan(\theta_1)>\frac{1}{\sqrt{3}}+\frac{2\eta}{k}\label{tanangle},
\end{align}

for $\eta>0$ large enough (this constant is motivated in the proof of Corollary \ref{samezeros}). Our condition in equation (\ref{tanangle}) is equivalent to stating that the point at $z_1:=\frac{a}{q_2}+r_1e^{i\theta_1}$ with $\Real(z)=\frac{a+1/2}{q_2}$ satisfies $\Imag(z)>\frac{1}{2\sqrt{3}q_2}+\frac{\eta}{q_2 k}$. This guarantees that it lies sufficiently far from the lower boundary of $K_\epsilon$.\\


Note also that if $\theta_1$ satisfies \eqref{angle2}, then $\theta_{n+1}:=\theta_1+\frac{n\pi}{k}$ also satisfies \eqref{angle2} for each natural number $n$. With this in mind, we define $z_n$ for $n\geq 1$ to satisfy

\begin{align*}
z_n = \frac{a}{q_2}+r_n e^{i(\theta_1+\frac{n\pi}{k})},\qquad \Real(z_n)=\frac{a+1/2}{q_2}.
\end{align*}

One may additionally verify that
\begin{align}
|g_a(z_n)| = \frac{2}{r_n^k},\label{main g_a val}
\end{align}
where $r_n=\sqrt{\frac{1}{4}+q_2^2\cdot\Imag(z_n)^2}$.\\

Note that $z_1$ and $z_2$ are illustrated in Figure \ref{fig:coordinates} as the two open points. We finally define $W_n$ to be the rectangle in $K_{\epsilon}$ given by
$$W_n=\{z=x+iy\in K_{\epsilon}: \Imag(z_n)\leq y\leq \Imag(z_{n+1})\}.$$

Additionally, by our definition of $\theta_1$, we have that $W_1$ is the lowest-lying region of this form to lie within $\frac{\eta}{k}$ inside of $K_\epsilon$. Its boundary is illustrated as the bolded rectangle in Figure \ref{fig:coordinates}.\\

We also see that in Figure \ref{fig:coordinates}, the black dot inside of $W_1$ is the unique zero of $g_a(z)$ inside of $W_1$. In general, we may define $\theta_n'=\frac{\theta_n+\theta_{n+1}}{2}=\theta_n+\frac{\pi}{2k}$, and see that this is the unique argument in equation \eqref{charactersumzero} which yields a zero of $g_a(z)$ inside $W_n$. This discussion is summarized in the following Proposition:

\begin{prop}\label{maintermzero} The main term $g_a(z)$ has exactly one zero in each region $W_n$ occurring at the argument $\theta_n'$ in the coordinates defined in equation \eqref{polar}.
\end{prop}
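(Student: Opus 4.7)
The plan is to combine two direct observations into a counting argument. First, every zero of $g_a(z)$ must lie on the vertical line $\Real(z) = \frac{a+1/2}{q_2}$: the reverse triangle inequality gives
$$|g_a(z)| \geq \bigl|\, |q_2z-a|^{-k} - |q_2z-a-1|^{-k}\, \bigr|,$$
so vanishing forces $|q_2z-a| = |q_2z-a-1|$, equivalently $\Real(z) = \frac{a+1/2}{q_2}$, the perpendicular bisector of the segment from $\frac{a}{q_2}$ to $\frac{a+1}{q_2}$. Hence it suffices to analyze zeros on this vertical line, where the subsection has already reduced $g_a(z) = 0$ to equation \eqref{charactersumzero}.

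Setting $c := (-1)^k\chi_2(a)\overline{\chi_2(a+1)}$, which is a unit complex number, equation \eqref{charactersumzero} reads $e^{2ik\theta} = -c$. Since the right-hand side is fixed on the unit circle, the solutions in $\theta$ form an arithmetic progression of common difference $\pi/k$. By \eqref{angle2} we have $|e^{2ik\theta_n} + c| = 2$; since two unit complex numbers have sum of modulus $2$ only when they coincide, this forces $e^{2ik\theta_n} = c$, and therefore
$$e^{2ik(\theta_n + \pi/(2k))} = e^{i\pi}\cdot e^{2ik\theta_n} = -c,$$
so $\theta_n' := \theta_n + \tfrac{\pi}{2k}$ solves \eqref{charactersumzero}. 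The full solution set on the vertical line is then $\{\theta_n' : n \in \Z\}$.

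Finally, the slice $W_n \cap \{\Real(z) = \frac{a+1/2}{q_2}\}$ corresponds under \eqref{polar} to the angular interval $[\theta_n, \theta_{n+1}] = [\theta_n, \theta_n + \pi/k]$: one reads off from \eqref{polar} that $\tan\theta = 2q_2\Imag(z)$, which is strictly monotone in $\Imag(z)$ throughout $K_\epsilon$. This interval has length exactly $\pi/k$ and its midpoint is $\theta_n'$, so it contains exactly one element of the arithmetic progression of zeros. Combined with the first paragraph, this is the unique zero of $g_a$ in $W_n$, located at $\theta_n'$. There is no substantial obstacle: the proposition is set up to follow by matching the angular spacing of zeros ($\pi/k$) to the angular width of each $W_n$-slice; the only bookkeeping is to confirm that $\theta \in (0,\pi/2)$ throughout, guaranteed by the definition of $K_\epsilon$ and the lower bound \eqref{tanangle}.
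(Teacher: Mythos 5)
Your proof is correct and follows essentially the same route as the paper's own discussion preceding the proposition: reduce to the vertical line $\Real(z)=\frac{a+1/2}{q_2}$ via equality of the two terms' magnitudes, note that the solutions of \eqref{charactersumzero} form an arithmetic progression in $\theta$ with spacing $\pi/k$, and match this against the angular width $\pi/k$ of each slice of $W_n$, whose midpoint is $\theta_n'$. The only step you make explicit that the paper leaves implicit is that \eqref{angle2} forces $e^{2ik\theta_n}=(-1)^k\chi_2(a)\overline{\chi_2(a+1)}$, so that $\theta_n'=\theta_n+\tfrac{\pi}{2k}$ is indeed a solution.
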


We now determine the number of zeros of $g_a(z)$ in $K_\epsilon$.

\subsection{Bounds on the Number of Zeros of $g_a(z)$ in $K_\epsilon$}

It suffices to determine the highest $W_n$ which is contained within $K_\epsilon$. We define $m$ to be the largest integer satisfying
\begin{align*}
\frac{1}{2q_2}\tan\left(\theta_1+\frac{m\pi}{k}\right)<\frac{c\sqrt{k}}{q_2}.
\end{align*}
for some sufficiently small constant $c$. That is, the imaginary part of the point $z_m$ is less than our upper height on $K_\epsilon$. Note that $\theta_1+\frac{m}{k}\pi \leq \arctan\left(2c\sqrt{k}\right)$, so
\begin{align*}
m &=  \frac{k}{\pi}\left(\arctan\left(2c\sqrt{k}\right)-\arctan\left(\frac{1}{\sqrt{3}}+\frac{2 \eta}{k}\right)\right) +O(1)\\
&=\frac{k}{3}-O(\sqrt{k}).
\end{align*}

This will lead us to prove that the conditions for Rouch\'e's Theorem hold for each $W_n$ where $1\leq n\leq m-1$.

\subsection{Inequalities on $\partial W_n$}\label{inequalities Wn}
Before stating our lemmas, we first include a brief proposition which will be useful for deriving lower bounds for $g_a(z)$.
\begin{prop}\label{epsconstant}
For a complex number $Y$ with $|Y|>1$ and $\delta$ complex such that $|\delta|$ is sufficiently small, we have that
\begin{align*}
\Big|Y+\frac{\delta}{k}\Big|^k=\Big|Y\Big|^k \left(1+O\left(\left|\frac{\delta}{Y}\right|\right)\right).
\end{align*}
\end{prop}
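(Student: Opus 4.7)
The plan is to reduce the estimate to a standard Taylor expansion of the logarithm after pulling out the dominant factor. First I would write
$$\Big|Y+\tfrac{\delta}{k}\Big|^k = |Y|^k\,\Big|1+\tfrac{\delta}{kY}\Big|^k,$$
so the statement reduces to showing $|1+w|^k = 1 + O(|\delta/Y|)$ where $w := \delta/(kY)$. Since $|Y|>1$, we have $|w| \leq |\delta|/k$, which for $|\delta|$ sufficiently small (and $k$ at least moderately large) is certainly less than $1/2$, so all expansions below converge.

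Next I would pass to the logarithm. Using $\log|1+w| = \Real\log(1+w)$ and the Taylor series $\log(1+w) = w - w^2/2 + w^3/3 - \cdots$, one obtains
$$\log|1+w| = \Real(w) + O(|w|^2).$$
Multiplying by $k$ and substituting back $w = \delta/(kY)$ gives
$$k\log\Big|1+\tfrac{\delta}{kY}\Big| = \Real\!\Big(\tfrac{\delta}{Y}\Big) + O\!\Big(\tfrac{|\delta|^2}{k|Y|^2}\Big).$$
Both terms on the right are $O(|\delta/Y|)$: the main term trivially, and the error because $|\delta|/k$ is small relative to $1$ while $1/|Y|^2 \leq 1/|Y|$ by hypothesis.

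Exponentiating and using $\exp(O(\eta)) = 1 + O(\eta)$ for bounded $\eta$ (which applies since $|\delta/Y| < |\delta|$ is small) yields
$$\Big|1+\tfrac{\delta}{kY}\Big|^k = 1 + O\!\Big(\Big|\tfrac{\delta}{Y}\Big|\Big),$$
and multiplying by $|Y|^k$ completes the proof. There is no real obstacle here; the only subtlety is ensuring that the error term in $O(|w|^2)$ after multiplication by $k$ does not swamp the main term, which is what the hypothesis $|Y|>1$ is designed to control.
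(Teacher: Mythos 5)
Your argument is correct and follows essentially the same route as the paper: both factor out $|Y|^k$, pass to the logarithm, expand $\log\left|1+\frac{\delta}{kY}\right|$ to first order, multiply by $k$, and exponentiate. Your version merely spells out the Taylor expansion and the real-part step a bit more explicitly than the paper does.
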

\begin{proof}
\begin{align*}
\left| Y+\frac{\delta}{k}\right|^k &= \exp\left(k\log\left(\left|Y+\frac{\delta}{k}\right|\right)\right)=\exp\left(k\log(\left|Y\right|)+kO\left(\frac{|\delta|}{|Y|k}\right)\right)\\
&=\left|Y\right|^k\cdot\exp\left(O\left(\left|\frac{\delta}{Y}\right|\right)\right)=\Big|Y\Big|^k \left(1+O\left(\left|\frac{\delta}{Y}\right|\right)\right).\qedhere
\end{align*}
\end{proof}

We will now prove the following Lemmas.

\begin{lem}\label{RVYbound} In $K_{\epsilon}$, we have that, with an absolute implied constant,
\begin{align}
\Big|E_{\chi_1,\chi_2,k}(z)-g_a(z)\Big| \ll (2q_2 y)^{-k} + q_2 y \left(\frac{9}{4} + q_2^2 y^2\right)^{-k/2}.\label{upperE-g}
\end{align}
\end{lem}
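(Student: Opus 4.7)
The plan is to bound the difference $E_{\chi_1,\chi_2,k}(z) - g_a(z)$, which is $\tfrac{1}{2}$ times the sum in \eqref{czd-expansion} with the four coprime pairs $(c,d) \in \{(\pm 1,\mp a),\,(\pm 1,\mp(a+1))\}$ removed, by splitting it as $S_1 + S_{\geq 2}$ according to whether $|c|=1$ or $|c|\geq 2$, and then matching these pieces to the two terms in the target estimate. The aim is $|S_1| \ll q_2 y(9/4 + q_2^2 y^2)^{-k/2}$ and $|S_{\geq 2}| \ll (2q_2 y)^{-k}$.

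For $S_1$, the $c=\pm 1$ contributions coincide after $d \mapsto -d$ via the nebentypus relation $\chi_1(-1)\chi_2(-1) = (-1)^k$, so $S_1 = \sum_{m \ne 0,-1} \chi_2(m-a)/(q_2 z - a + m)^k$ where $m := a+d$. Given $z \in K_\epsilon$, set $\xi := x - (a+1/2)/q_2$, so $|q_2 \xi| \leq \epsilon/k$ and the denominator becomes $(m+1/2) + q_2\xi + iq_2 y$. Applying Proposition \ref{epsconstant} with $Y = (m+1/2) + iq_2 y$ (which satisfies $|Y| \geq 3/2 > 1$ for $m \ne 0,-1$) turns the horizontal perturbation into a multiplicative $(1+O(\epsilon))$ factor, reducing matters to bounding $\sum_{m \ne 0,-1} ((m+1/2)^2 + q_2^2 y^2)^{-k/2}$. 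The reflection $m \leftrightarrow -1-m$ halves the sum to $m \geq 1$, and the identity $(m+1/2)^2 = 9/4 + (m-1)(m+2)$ factors $(9/4 + q_2^2 y^2)^{-k/2}$ out of every term. The leftover series $\sum_{m \geq 1} \bigl(1 + (m-1)(m+2)/(9/4 + q_2^2 y^2)\bigr)^{-k/2}$ is controlled by an absolute constant uniformly in $q_2 y$: when $q_2 y = O(1)$ the terms decay geometrically in $m$, and when $q_2 y \asymp \sqrt{k}$ the estimate $(1+u)^{-k/2} \leq e^{-ku/3}$ (for $u$ small) reduces the series to $\sum_{m\geq 1} \exp(-c_1 m^2)$. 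Finally, $q_2 y \geq 1/(2\sqrt{3})$ on $K_\epsilon$ allows a free factor of $q_2 y$ to be inserted into the bound.

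For $S_{\geq 2}$, bound $|\chi_1(c)\chi_2(d)| \leq 1$ and drop the coprimality condition on $d$. For fixed $|c|\geq 2$, a standard integral-sum comparison on $\sum_{d \in \Z}((d+cq_2 x)^2 + c^2 q_2^2 y^2)^{-k/2}$ produces an absolute-constant bound of the form $(|c|q_2 y)^{-k} + (|c|q_2 y)^{1-k}/\sqrt{k}$. The second term is $(q_2 y/\sqrt{k})$ times the first, and since $q_2 y \ll \sqrt{k}$ on $K_\epsilon$, the tail is absorbed into $O((|c|q_2 y)^{-k})$. Summing via $\sum_{|c|\geq 2}|c|^{-k} \leq 2^{2-k}$ for $k \geq 3$ then yields $|S_{\geq 2}| \ll (2q_2 y)^{-k}$, completing the estimate.

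The main obstacle is keeping all implied constants uniform in $q_1$, $q_2$, and $k$ simultaneously. In particular, the residual $m$-sum in $S_1$ must be handled by two distinct arguments depending on whether $q_2 y$ is of order $1$ or of order $\sqrt{k}$, and Proposition \ref{epsconstant} is essential for converting the $O(\epsilon/k)$ horizontal width of $K_\epsilon$ into a multiplicative $(1+O(\epsilon))$ correction without incurring any polynomial-in-$k$ losses that would pollute the final bound.
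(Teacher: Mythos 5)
The paper does not prove this lemma at all---it simply cites Lemmas 3.2 and 3.4 of [RVY]---so your self-contained argument is supplying what the paper outsources. Your decomposition into the $|c|=1$ contribution and the $|c|\geq 2$ tail is exactly the split that produces the two terms $q_2 y\left(\frac{9}{4}+q_2^2y^2\right)^{-k/2}$ and $(2q_2y)^{-k}$ in the statement, and it is the natural adaptation of the [RVY] argument. The $S_1$ analysis is correct: the folding of $c=-1$ into $c=1$ via the nebentypus relation is legitimate (it is the same computation that shows $g_a$ accounts for all four removed pairs with the $\tfrac12$ prefactor), Proposition \ref{epsconstant} applies with $\delta = kq_2\xi$ of size at most $\epsilon$, and your two-regime treatment of the residual $m$-sum (geometric decay for $q_2y=O(1)$, Gaussian decay for $q_2y\asymp\sqrt{k}$, with the crossover handled by $\sum_{m>\sqrt{A}}A^{k/2}m^{-k}\ll \sqrt{A}/k\ll 1$) does give an absolute constant; inserting the harmless factor $q_2y\gg 1$ then matches the stated bound.

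One small inaccuracy in Part $S_{\geq 2}$: the secondary term $(|c|q_2y)^{1-k}/\sqrt{k}$ is $|c|q_2y/\sqrt{k}$ times the main term, not $q_2y/\sqrt{k}$ times it, so the claimed absorption into $O\left((|c|q_2y)^{-k}\right)$ fails for large $|c|$. This does not damage the conclusion, because you should simply sum the secondary terms directly: $\sum_{|c|\geq 2}(|c|q_2y)^{1-k}/\sqrt{k}\ll (2q_2y)^{1-k}/\sqrt{k}\ll (2q_2y)^{-k}\cdot\left(2q_2y/\sqrt{k}\right)\ll (2q_2y)^{-k}$, using $q_2y\ll\sqrt{k}$ only at $|c|=2$ where the sum is dominated. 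With that one-line repair the proof is complete and uniform in $q_1$, $q_2$, and $k$ as required.
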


\begin{lem}\label{g_ell on W}
The following bound holds for $g_a(z)$ on the boundary $\partial W_n$, where $1\leq n\leq m-1$:
\begin{align}
|g_a(z)|\gg \left(\frac{1}{4}+q_2^2 y^2\right)^{-k/2-1}.\label{lowerg_a}
\end{align}
\end{lem}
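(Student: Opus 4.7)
The plan is to analyze $|g_a(z)|$ separately on the four edges of the rectangular boundary $\partial W_n$ by factoring $g_a$ so that its two competing terms are visible. Setting $A_1 := q_2 z - a$, $A_2 := q_2 z - a - 1$, and $\zeta := \chi_2(a+1)\overline{\chi_2(a)}$, we have
$$g_a(z) = \frac{\chi_2(-a)}{A_1^k}\bigl(1 + \zeta\,(A_1/A_2)^k\bigr),$$
and with $\rho := |A_1/A_2|$ and $\varphi := \arg\zeta + k\arg(A_1/A_2)$, the elementary identity
$$|g_a(z)|^2\,|A_1|^{2k} \;=\; (1 - \rho^k)^2 + 4\rho^k \cos^2(\varphi/2)$$
exposes both a phase and a magnitude mechanism for lower bounds. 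Writing $u := q_2 x - a - 1/2 \in [-\epsilon/k,\epsilon/k]$ on $K_\epsilon$, one has $|A_1|^2 = R^2 + u + u^2$ and $|A_2|^2 = R^2 - u + u^2$ with $R^2 = 1/4 + q_2^2 y^2$. The defining property \eqref{angle2} forces $\zeta(A_1/A_2)^k = 1$, i.e. $\varphi \equiv 0 \pmod{2\pi}$, at $u = 0$ and $y = \Imag(z_j)$ for $j \in \{n, n+1\}$, as already used in the excerpt.

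On the horizontal edges $y = \Imag(z_j)$, the phase mechanism dominates. The relation $A_1(-u) = -\overline{A_2(u)}$ shows that $\psi(u) := \arg(A_1/A_2)$ is even in $u$, so $\psi'(0) = 0$; a short computation gives $\psi''(0) = 2 q_2 y / R^4$. Taylor's theorem then yields
$$\varphi(u) - \varphi(0) \;=\; O\bigl(k\,u^2\,q_2 y / R^4\bigr) \;=\; O\bigl(\epsilon^2\,q_2 y / (k\,R^4)\bigr),$$
which is $O(\epsilon^2/k)$ uniformly because $q_2 y/R^4$ is bounded for $R^2 \ge 1/3$. Hence $\cos^2(\varphi/2) \gg 1$, and using $|A_1|^2|A_2|^2 = (R^2 + u^2)^2 - u^2 \asymp R^4$,
$$|g_a(z)| \gg \frac{\rho^{k/2}}{|A_1|^k} \;=\; \frac{1}{(|A_1|\,|A_2|)^{k/2}} \;\asymp\; R^{-k} \;\gg\; R^{-k-2},$$
since $R^{-2} \le 3$.

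On the vertical edges $u = \pm \epsilon/k$ (with $y$ varying), the magnitude mechanism dominates. Direct calculation gives $\rho^2 = 1 + (2u/R^2)\bigl(1 + O(1/k)\bigr)$, and Proposition~\ref{epsconstant} upgrades this to $\rho^k = e^{\pm\epsilon/R^2}\bigl(1 + O(1/k)\bigr)$ uniformly in $R$. Because $\epsilon/R^2 \le 3\epsilon$ is small and bounded, and $|1 - e^t| \asymp |t|$ for small $t$, one gets $|1 - \rho^k| \gg \epsilon/R^2$. A second application of Proposition~\ref{epsconstant} shows $|A_1|^k \asymp R^k$, so
$$|g_a(z)| \;\ge\; \frac{|1 - \rho^k|}{|A_1|^k} \;\gg\; \frac{\epsilon}{R^{k+2}},$$
which is the desired lower bound since $\epsilon$ is a fixed positive constant.

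The main obstacle is ensuring that all implied constants are absolute and uniform as $R^2$ ranges over the full interval $[1/3,\,O(k)]$; this is precisely the role of the parameter $\eta$ in \eqref{tanangle}, which keeps $R \ge 1/\sqrt{3}$ throughout $\partial W_n$ and thereby controls $R^{-2}$, the Taylor remainder for $\psi$, and the exponential expansion for $\rho^k$ by absolute constants. Combining the horizontal and vertical edge estimates yields $|g_a(z)| \gg R^{-k-2} = (1/4 + q_2^2 y^2)^{-k/2-1}$ on all of $\partial W_n$.
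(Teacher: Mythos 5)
Your proof is correct and follows essentially the same route as the paper's: on the vertical edges you use the reverse triangle inequality and the magnitude gap $|1-\rho^k|\gg \epsilon/R^2$, and on the horizontal edges you use the exact phase alignment at $z_n,z_{n+1}$ guaranteed by \eqref{angle2} together with an $O(\epsilon)$ perturbation, which is precisely the paper's two-case argument. Your law-of-cosines identity and the evenness of $\arg(A_1/A_2)$ are just a slightly more explicit packaging of what the paper does term-by-term via Proposition \ref{epsconstant}, so the substance is identical.
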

Lemma \ref{RVYbound} follows directly from the proofs of Lemmas 3.2 and 3.4 in [RVY] by bounding similar terms in the Eisenstein series and omitting any normalizing constants. We will prove Lemma \ref{g_ell on W} below, but first we note that by these lemmas, we obtain the following corollary as a result:

\begin{cor}\label{samezeros} 
For sufficiently large $k$, we have that $E_{\chi_1,\chi_2,k}(z)$ has a unique zero in $W_n$ for each $1\leq n\leq m-1$.
\end{cor}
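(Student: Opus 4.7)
The plan is to apply Rouch\'e's Theorem to the pair $(E_{\chi_1,\chi_2,k}, g_a)$ on each rectangular contour $\partial W_n$ for $1 \leq n \leq m-1$. Since Proposition \ref{maintermzero} ensures that $g_a$ has exactly one zero in each such region, Rouch\'e will transfer this count to $E_{\chi_1,\chi_2,k}$ and yield the corollary.

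First I would observe that the hypothesis $|E_{\chi_1,\chi_2,k}-g_a|<|E_{\chi_1,\chi_2,k}|+|g_a|$ of the version of Rouch\'e stated above is implied by the stronger pointwise inequality $|E_{\chi_1,\chi_2,k}(z)-g_a(z)|<|g_a(z)|$ on $\partial W_n$. Combining the upper bound from Lemma \ref{RVYbound} with the lower bound from Lemma \ref{g_ell on W}, and writing $t:=q_2y$, the task reduces to checking that
\begin{equation*}
(1/4+t^2)\left(\frac{1/4+t^2}{4t^2}\right)^{k/2} + t\,(1/4+t^2)\left(\frac{1/4+t^2}{9/4+t^2}\right)^{k/2} < 1
\end{equation*}
on $\partial W_n$ for $k$ sufficiently large, with an absolute implied constant.

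Next I would handle the two summands uniformly in $t \in [1/(2\sqrt{3})+O(\eta/k),\,c\sqrt{k}]$. For the first summand, $(1/4+t^2)/(4t^2)\leq 1$ with equality precisely at $t=1/(2\sqrt{3})$, so the $\eta$-buffer built into condition \eqref{tanangle} keeps $t$ bounded away from that critical value by at least $C\eta/k$; raising to the $k/2$ power yields exponential decay of order $e^{-C'\eta/2}$ which beats the polynomial prefactor $1/4+c^2k$ once $\eta$ is taken to be a sufficiently large absolute constant. For the second summand, $(1/4+t^2)/(9/4+t^2)=1-2/(9/4+t^2)<1$ uniformly, and the worst case is the top of $K_\epsilon$ where $t\sim c\sqrt{k}$; here the $k/2$-power gives decay $\exp(-1/c^2)$, which must overcome the polynomial growth $c^3k^{3/2}$, and this is secured by choosing $c$ to be a sufficiently small absolute constant. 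At all intermediate $t$, at least one of the two mechanisms (the $\eta$-buffer or the $4^{-k/2}$-type exponential gap) produces exponential decay that trivially dominates.

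With both absolute constants $c$ and $\eta$ calibrated and $k$ taken large enough, the Rouch\'e hypothesis holds along every $\partial W_n$ for $1\leq n\leq m-1$, so $E_{\chi_1,\chi_2,k}$ and $g_a$ have the same number of zeros inside $W_n$, which is exactly one by Proposition \ref{maintermzero}. The main obstacle is the quantitative analysis at the top of $K_\epsilon$, where $t\sim c\sqrt{k}$: there the ratio $(1/4+t^2)/(9/4+t^2)$ tends to $1$ and the gain from the $k/2$-power is only marginal against the polynomial factor $c^3 k^{3/2}$. This marginal competition is precisely what pins down the smallness of the absolute constant $c$ in the definition of $K_\epsilon$, and it is also the reason Lemma \ref{g_ell on W} carries the sharper exponent $-k/2-1$ rather than a naive $-k/2$.
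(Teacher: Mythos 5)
Your strategy is the paper's own: establish the strong Rouch\'e inequality $|E_{\chi_1,\chi_2,k}(z)-g_a(z)|<|g_a(z)|$ on each $\partial W_n$ by playing the upper bound of Lemma \ref{RVYbound} against the lower bound of Lemma \ref{g_ell on W}, then transfer the zero count from Proposition \ref{maintermzero}. Your reduction to the displayed inequality in $t=q_2y$ is correct, and your treatment of the first summand is sound: near the bottom the $\eta/k$ buffer from \eqref{tanangle} forces $\left(\frac{1/4+t^2}{4t^2}\right)^{k/2}\ll e^{-C\eta}$ against a bounded prefactor (note the relevant prefactor there is $1/4+t^2\approx 1/3$, not $1/4+c^2k$), and away from the bottom the base is bounded below $1$ by an absolute constant.

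The handling of the second summand at the top of the region, however, contains a genuine error. At $t\sim c\sqrt k$ you correctly arrive at the quantity $c^3k^{3/2}\exp(-1/c^2)$, but you then claim it is made small "by choosing $c$ to be a sufficiently small absolute constant." For any fixed $c>0$ the factor $\exp(-1/c^2)$ is a constant while $c^3k^{3/2}\to\infty$, so the product diverges as $k\to\infty$; no absolute choice of $c$ saves the inequality on the uppermost rectangles $W_n$. Quantitatively,
\begin{equation*}
t\left(\tfrac14+t^2\right)\left(\frac{\tfrac14+t^2}{\tfrac94+t^2}\right)^{k/2}\asymp t^3\exp\left(-\frac{k}{t^2}\right),
\end{equation*}
which is $o(1)$ only for $t\ll\sqrt{k/\log k}$, so the Rouch\'e step as you run it covers only those $W_n$ with $q_2\Imag(z_{n+1})=o(\sqrt{k/\log k})$, not all $n\leq m-1$ up to height $c\sqrt k$. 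To be fair, the paper's proof hedges at precisely this point by asserting the comparison "provided that $q_2y=o(\sqrt k)$" rather than proving uniformity up to the top of $K_\epsilon$, and even that stated condition is not quite sufficient; the honest fix is to let $c$ decay like $(\log k)^{-1/2}$ (or truncate the corollary at height $\sqrt{k/\log k}/q_2$), which sacrifices $O(\sqrt{k\log k})$ of the $m$ rectangles. Your closing paragraph shows you identified exactly where the argument is thinnest, but the resolution you propose does not actually close the gap.
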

\begin{proof}[Proof of Corollary \ref{samezeros}]
On the boundary $\partial W_n$, where $1\leq n\leq m-1$, we claim that the following inequality holds:
$$|g_a(z)|>|E_{\chi_1,\chi_2,k}(z)-g_a(z)|.$$
Using Lemmas \ref{RVYbound} and \ref{g_ell on W} on $\partial W_n$, we now argue that the bounds in \eqref{lowerg_a} are significantly larger than those in \eqref{upperE-g}, so long as $q_2 y$ is within a determined range. We have that
\begin{align*}
q_2 y\left(\frac{9}{4} + q_2^2 y^2\right)^{-k/2}=o\left(\left(\frac{1}{4}+q_2^2 y^2\right)^{-k/2-1}\right),
\end{align*}
provided that
\begin{align*}
q_2 y = o(\sqrt{k}).
\end{align*}
Additionally, we have that
\begin{align*}
(2q_2 y)^{-k}=o\left(\left(\frac{1}{4}+q_2^2 y^2\right)^{-k/2-1}\right),
\end{align*}
as $k$ tends to infinity. We may also see that

\begin{align*}
\frac{(2q_2y)^{-k}}{\left(\frac{1}{4}+q_2^2y^2\right)^{-k/2-1}}
\end{align*}

becomes sufficiently small when
\begin{align*}
q_2 y>\frac{1}{2\sqrt{3}}+\frac{\eta}{k},
\end{align*}
for an absolute constant $\eta$ which is large enough compared to the implied constants in \eqref{upperE-g} and \eqref{lowerg_a}.\\

On $\partial W_n$, this gives us that $|E_{\chi_1,\chi_2,k}-g_a(z)|$ vanishes quicker than $|g_a(z)|$ as $k$ tends to infinity. In particular, for a sufficiently large $k$, we have that
\begin{align*}
|g_a(z)| >\big|E_{\chi_1,\chi_2,k}(z)-g_a(z)\big|.
\end{align*}
Then by Rouch\'e's Theorem, we obtain that $g_a(z)$ and $E_{\chi_1,\chi_2,k}(z)$ have the same number of zeros in $W_n$ for each $1\leq n\leq m-1$. By Proposition \ref{maintermzero}, the result follows.
\end{proof}

We note that Theorem \ref{thm1} follows from the previous lemmas and corollary. It only remains to prove Lemma \ref{g_ell on W}.

\begin{proof}[Proof of Lemma \ref{g_ell on W}]
First we look at the right boundary of $K_\epsilon$ , where $\Real(z)=\frac{a+\frac{1}{2}+\frac{\epsilon}{k}}{q_2}$. By the reverse triangle inequality on this vertical line segment,
\begin{align*}
|g_a(z)| \geq \frac{1}{ \left( \left(\frac{1}{2}+\frac{\epsilon}{k}\right)^2 + q_2^2 y^2\right) ^{k/2}}\left(\left(\frac{\left(\frac{1}{2}+\frac{\epsilon}{k}\right)^2 + q_2^2 y^2}{{\left(\frac{1}{2}-\frac{\epsilon}{k}\right)^2 + q_2^2 y^2}}\right)^{k/2} -1\right).
\end{align*}
Expanding the term on the right, we obtain

\begin{align*}
\left(\frac{\left(\frac{1}{2}+\frac{\epsilon}{k}\right)^2 + q_2^2 y^2}{{\left(\frac{1}{2}-\frac{\epsilon}{k}\right)^2 + q_2^2 y^2}}\right)^{k/2} -1&=\left(1+\frac{2\frac{\epsilon}{k}}{\left(\frac{1}{2}-\frac{\epsilon}{k}\right)^2+q_2^2 y^2}\right)^{k/2}-1\\
&\geq \exp\left(\frac{k}{2}\log\left(1+\frac{2\epsilon}{k\left(\frac{1}{4}+q_2^2y^2\right)}\right)\right)-1\\
&=\exp\left(\frac{\epsilon}{\left(\frac{1}{4}+q_2^2y^2\right)}+O\left(\frac{\epsilon^2}{k\left(\frac{1}{4}+q_2^2 y^2\right)^2}\right)\right)-1\gg\frac{\epsilon}{\left(\frac{1}{4}+q_2^2 y^2\right)}.
\end{align*}

This gives us that on the right boundary of $K_\epsilon$,
\begin{align*}
|g_a(z)|\gg_\epsilon \frac{1}{|\frac{1}{4}+q_2^2 y^2|^{k/2+1}},
\end{align*}
by Proposition \ref{epsconstant}.\\

A symmetric argument holds when $\Real(z)=\frac{a+\frac{1}{2}-\frac{\epsilon}{k}}{q_2}$. We now turn our attention to the bottom segment of $\partial W_n$.\\

Recall from equation \eqref{main g_a val}, we have that
\begin{align*}
|g_a(z_n)|=\frac{2}{\left(\frac{1}{4}+q_2^2 y^2\right)^{k/2}}.
\end{align*}
Letting $\delta$ vary from $0\leq\delta\leq\epsilon$, we have $z=z_n+\frac{\delta}{q_2 k}$ on the lower boundary of $\partial W_n$. We may use Proposition \ref{epsconstant} to show

\begin{align*}
\frac{\chi_2(-a)}{(\frac{1}{2} + \frac{\delta}{k}+ iq_2 y)^k} = \frac{\chi_2(-a)}{(\frac{1}{2} + iq_2 y)^k\left(1+\frac{\delta}{k\left(\frac{1}{2}+iq_2 y\right)}\right)^k} =\frac{\chi_2(-a)}{(\frac{1}{2} + iq_2 y)^k}\left(1+O\left(\frac{\epsilon}{\left|\frac{1}{2}+iq_2 y\right|}\right)\right).\\
\end{align*}

We use this to see that
\begin{align*}
|g_a(z)|=|g_a(z_n)|\left(1+O\left(\frac{\epsilon}{\left|\frac{1}{2}+iq_2 y\right|}\right)\right)\gg_\epsilon \left|\frac{1}{4}+q_2^2 y^2\right|^{-k/2}.
\end{align*}

\end{proof}

\subsection{On the $\Gamma_0(q_1 q_2)$-inequivalence of Zeros}\label{ineqsection}

\begin{prop}\label{ineqzeros} The points contained in the region with $-\frac{1}{2}<x
\leq\frac{1}{2}$ and $y>\frac{1}{q_1q_2}$ are $\Gamma_0(q_1 q_2)$-inequivalent.\end{prop}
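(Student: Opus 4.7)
The plan is to fix $z, z'$ in the specified region with $\gamma z = z'$ for some $\gamma = \begin{pmatrix} a & b \\ c & d \end{pmatrix} \in \Gamma_0(q_1 q_2)$, and show that $z = z'$. I would split on whether the lower-left entry $c$ of $\gamma$ vanishes.

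If $c = 0$, then $ad = 1$ forces $a = d = \pm 1$ (so $\gamma = \pm\begin{pmatrix} 1 & b \\ 0 & 1 \end{pmatrix}$), and $\gamma$ acts on $\mathbb{H}$ as translation by the integer $b$. Since both $\Real(z)$ and $\Real(z) + b$ lie in the half-open interval $(-\tfrac{1}{2}, \tfrac{1}{2}]$ of length $1$, we must have $b = 0$, so $\gamma \in \{\pm I\}$ and $z = z'$.

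If $c \neq 0$, then $q_1 q_2 \mid c$ gives $|c| \geq q_1 q_2$. From the standard identity
\begin{equation*}
\Imag(\gamma z) = \frac{\Imag(z)}{|cz+d|^2}
\end{equation*}
and the bound $|cz+d|^2 \geq c^2 y^2 \geq (q_1 q_2)^2 y^2 > 1$ (using $y > \tfrac{1}{q_1 q_2}$), I get $\Imag(z') = \Imag(\gamma z) < \Imag(z)$. The inverse $\gamma^{-1} = \begin{pmatrix} d & -b \\ -c & a \end{pmatrix}$ also lies in $\Gamma_0(q_1 q_2)$ and has the same nonzero lower-left entry up to sign, so applying the identical argument with $z$ and $z'$ swapped yields $\Imag(z) = \Imag(\gamma^{-1} z') < \Imag(z')$. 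These two strict inequalities contradict one another, ruling out this case.

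There is no real obstacle here; the argument is the standard Ford-circle type lower bound on $|cz+d|$ for $c \neq 0$ divisible by the level, combined with a symmetric application to $\gamma^{-1}$ to convert the one-sided height drop into an impossibility. The only point to be careful about is using the half-open convention $-\tfrac{1}{2} < x \leq \tfrac{1}{2}$ to conclude $b = 0$ in the translation case, and recalling that elements of $\Gamma_0(q_1 q_2)$ automatically satisfy $ad - bc = 1$.
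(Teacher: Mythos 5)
Your argument is correct and is essentially the paper's: both rest on the identity $\Imag(\gamma z)=\Imag(z)/|cz+d|^2$ together with $q_1q_2\mid c$ and the lower bound $y>\frac{1}{q_1q_2}$ to force $c=0$, whence $\gamma$ is a translation that must be trivial on a width-one strip. The only (cosmetic) differences are that you run the height comparison symmetrically through $\gamma^{-1}$ instead of assuming $\Imag(z')\geq\Imag(z)$ without loss of generality, and you spell out the final translation step that the paper leaves implicit.
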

\proof Suppose we have two points $z$ and $z'$ in $K_\epsilon$ such that $\Imag(z')\geq\Imag(z)$, and a $\gamma\in\Gamma_0(q_1 q_2)$ such that $\gamma z=z'$. Then we must have that
\begin{align*}
|cz+d|^2\leq 1,
\end{align*}
if $\gamma=\left(\begin{smallmatrix} a & b\\ c& d\end{smallmatrix}\right)$. We then have that $c^2 y^2\leq 1$, which implies that $|c|< q_1 q_2$ by our lower bound on $y=\Imag(z)$ in $K_\epsilon$. Since $c\equiv0\mod{q_1 q_2}$ we must have that $c=0$ and $d=\pm 1$, that is, $\gamma$ is a translate.\qed \\
\begin{cor} If $q_1>3$, all the zeros found in Theorem \ref{thm1} are $\Gamma_0(q_1 q_2)$-inequivalent.

\end{cor}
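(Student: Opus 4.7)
The plan is to apply Proposition \ref{ineqzeros} directly to the zeros produced by Theorem \ref{thm1}, so the task reduces to verifying the two hypotheses of that proposition: every such zero satisfies $\Imag(z) > \frac{1}{q_1 q_2}$ and lies in the fundamental strip $-\frac{1}{2} < \Real(z) \leq \frac{1}{2}$.

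First I would handle the imaginary-part bound. By the construction in Section \ref{Tsection}, each zero from Theorem \ref{thm1} lies in some rectangle $W_n \subset K_\epsilon$, and the defining condition \eqref{tanangle} on $\theta_1$ ensures that the lower edge of $W_1$ sits above $y = \frac{1}{2\sqrt{3}\,q_2} + \frac{\eta}{q_2 k}$; in particular $y > \frac{1}{2\sqrt{3}\,q_2}$. Since $q_1$ is a positive integer, the hypothesis $q_1 > 3$ gives $q_1 \geq 4 > 2\sqrt{3}$, so
$$\frac{1}{q_1 q_2} < \frac{1}{2\sqrt{3}\,q_2} < \Imag(z),$$
as required.

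For the real-part condition, I would restrict $a$ to a complete set of residues modulo $q_2$ with $\gcd(a,q_2) = \gcd(a+1,q_2) = 1$ chosen so that $\frac{a+1/2}{q_2} \in \left(-\frac{1}{2},\frac{1}{2}\right]$. Since Theorem \ref{thm1} locates the corresponding zeros within $O(1/(q_2 k))$ of the line $x = \frac{a+1/2}{q_2}$, each zero sits in that strip once $k$ is sufficiently large, and distinct such $a$ yield zeros within $O(1/(q_2 k))$ of parallel vertical lines separated by at least $1/q_2$, so the zeros themselves are pairwise distinct for large $k$. Proposition \ref{ineqzeros} then forces them to be pairwise $\Gamma_0(q_1 q_2)$-inequivalent. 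The only substantive step is the numerical inequality $q_1 \geq 4 > 2\sqrt{3}$; if $q_1 \leq 3$, the bound $y > \frac{1}{2\sqrt{3}\,q_2}$ inherited from $K_\epsilon$ would no longer clear $\frac{1}{q_1 q_2}$, and a finer region—or a different argument—would be needed to separate the orbits.
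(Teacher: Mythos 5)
Your argument is correct and is exactly the paper's intended route: the corollary is stated as an immediate consequence of Proposition \ref{ineqzeros}, and the only content is the observation that $q_1 > 3$ forces $q_1 \geq 4 > 2\sqrt{3}$, so the lower boundary $y > \frac{1}{2\sqrt{3}q_2}$ of $K_\epsilon$ clears $\frac{1}{q_1 q_2}$, while the zeros can be placed in the strip $-\frac{1}{2} < x \leq \frac{1}{2}$ by choosing representatives $a$ appropriately. Your verification of both hypotheses matches what the paper leaves implicit.
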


\subsection{A Remark On $(a+1,q_2)>1$}

In the case where $a$ and $a+1$ are not both coprime to $q_2$, we may carry out a similar argument. Let $(a,q_2)=(a+b,q_2)=1$ such that $(a+t,q_2)>1$ for all integers $0<t<b$. We then obtain zeros approaching the line $x=\frac{a+b/2}{q_2}$ given by 
$$z=\frac{a+b/2}{q_2}+iy =\frac{a}{q_2}+Re^{i\theta},$$
for $\theta$ satisfying
\begin{align*}
\big| e^{2i\theta k} +(-1)^k \chi_2(a)\overline{\chi_2(a+b)}\big|=0.
\end{align*}
We can illustrate this in the special case of finding zeros around the imaginary axis in $\mathbb{H}$.\\

In a neighborhood around $x=0$, we have that the main terms of $E_{\chi_1,\chi_2,k}(x+iy)$ are
\begin{align*}
g(z)=\frac{\chi_2(-1)}{(q_2 z -1)^k}+\frac{\chi_2(1)}{(q_2 z +1)^k}.
\end{align*}
We have the same upper bounds on $\big|E_{\chi_1,\chi_2,k}(z)-g(z)\big|$ from the above theorem and analogous lower bounds for $|g(z)|$ on the boundary. We then obtain zeros approaching the line $x=0$ as $k$ tends to infinity, when
\begin{align*}
\big|e^{2i\theta k} + (-1)^k\chi_2(-1)\big| =0.
\end{align*}

However we note that $(-1)^k\chi_2(-1)=\chi_1(-1)$. This gives us that the zeros of the main term are of the form
$$\frac{-1}{q_2}+Re^{i\theta}=iy.$$
Therefore these zeros depend only on the parameters $\text{sgn}(\chi_1)$ and $q_2$.

\color{black}
\section{Zeros in the Region $\Imag(z) \gg \sqrt{k}$}\label{Vsection}

\subsection{The Fourier Expansion}

For this portion of the paper, let
\begin{equation*}
    F(z) = \sum_{n=1}^{\infty} \left[ \sum_{ab = n} \chi_1(a) \overline{\chi_2}(b) b^{k-1} \right] \e( nz ).
\end{equation*}
Recalling \eqref{fourier-expansion}, the zeros of $F(z)$ are the zeros of $E_{\chi_1,\chi_2,k}(z)$. In the definition of $F(z)$, we let
\begin{equation}\label{eq:fsubj}
f_n(z) = \overline{\chi_2}(n) n^{k-1} \e( nz ),
\end{equation}
and take $a = 1$ and $b = n$ to simplify the expansion to
\begin{equation*}
    F(z) = \sum_{n=1}^{\infty} f_n(z) + \delta(z),
\end{equation*}
where
\begin{equation}\label{eq:delta}
    \left| \delta(x+iy) \right| \leq \sum_{n=1}^{\infty} n^{k-1} \exp(-2\pi ny) \Bigg( \sum_{\substack{b \mid n \\ b < n}} \left( \frac{b}{n} \right)^{k-1} \Bigg).
\end{equation}

\subsection{The Main Term of $F(z)$}\label{sec:MainTerm}

Now, we define our main term, denoted $h_{\ell}(z)$. For our purposes, assume $\ell \leq \epsilon\sqrt{k}$ for a sufficiently small $\epsilon > 0$, and assume also that $\ell$ and $\ell + 1$ are coprime to $q_2$. Then, we consider two terms of the Fourier expansion, $n = \ell$ and $n = \ell + 1$, and we define $h_{\ell}(z)$ to be the main term of the expansion:
\begin{equation*}
    h_{\ell}(z) = f_{\ell}(z) + f_{\ell + 1}(z),
\end{equation*}
where we take $z$ to be restricted to the region:
\begin{equation}\label{eq:YBounds}
    \frac{k-1}{2\pi (\ell + 1)} =: y_{\ell + 1} \leq y \leq y_{\ell} := \frac{k-1}{2 \pi \ell}.
\end{equation}

Write
\begin{equation}\label{eq:NormFwithErr}
    F(z) = h_{\ell}(z) + \beta(z),
\end{equation}
where $\beta(z) = f_{\ell + 2}(z) + f_{\ell + 3}(z) + f_{\ell - 1}(z) + f_{\ell - 2}(z) + \varepsilon_1(z) + \varepsilon_2(z) + \delta(z)$ and where $\varepsilon_1(z) = \sum_{n = 1}^{\ell - 3} \; f_n(z)$ and $\varepsilon_2(z) = \sum_{n = \ell + 4}^{\infty} \; f_n(z)$.\\

Now we may find zeros of $h_{\ell}(z)$ in the region from \eqref{eq:YBounds}.

\begin{lem}\label{lem:ZeroH}
The main term $h_{\ell}(z)$ has a unique zero $x_0 + i y_0$ in the region $-\frac{1}{2} < x \leq \frac{1}{2}$ and $y_{\ell + 1} \leq y \leq y_{\ell}$, with $x_0$ and $y_0$ given by \eqref{eq:xnot} and \eqref{eq:ynot} below. 
\end{lem}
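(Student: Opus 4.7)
My plan is to prove Lemma \ref{lem:ZeroH} by solving the equation $h_\ell(z) = 0$ directly. Expanding the definition from \eqref{eq:fsubj},
\[
h_\ell(z) = \overline{\chi_2}(\ell)\,\ell^{k-1}\,\e(\ell z) + \overline{\chi_2}(\ell+1)\,(\ell+1)^{k-1}\,\e((\ell+1)z).
\]
Since $\e(\ell z)$ is nonvanishing on $\mathbb{H}$, I would divide through by the first summand to reduce $h_\ell(z)=0$ to the single scalar equation
\[
\e(z) = C, \qquad C := -\frac{\overline{\chi_2}(\ell)}{\overline{\chi_2}(\ell+1)}\left(\frac{\ell}{\ell+1}\right)^{k-1}.
\]
The coprimality hypotheses $(\ell,q_2)=(\ell+1,q_2)=1$ guarantee that $\overline{\chi_2}(\ell)$ and $\overline{\chi_2}(\ell+1)$ are unit complex numbers, so $|C| = (\ell/(\ell+1))^{k-1} \in (0,1)$.

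Next I would analyze this scalar equation in the half-strip $-\tfrac{1}{2} < x \leq \tfrac{1}{2}$, $y > 0$. The map $z \mapsto \e(z)$ restricts to a bijection from this half-strip onto the punctured open unit disk, so any $C$ with $0 < |C| < 1$ has a unique preimage. Taking moduli yields
\[
y_0 = \frac{k-1}{2\pi}\log\!\left(1 + \frac{1}{\ell}\right),
\]
which would appear as \eqref{eq:ynot}, while taking arguments produces a unique $x_0 \in (-\tfrac{1}{2},\tfrac{1}{2}]$ with $\e(x_0) = C/|C|$, giving \eqref{eq:xnot}. Both existence and uniqueness then follow simultaneously from the bijectivity statement above.

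It remains to verify that $y_0$ actually lies in the prescribed interval $[y_{\ell+1}, y_\ell]$ from \eqref{eq:YBounds}. This is immediate from the elementary inequalities $\frac{1}{\ell+1} \leq \log\!\left(1 + \frac{1}{\ell}\right) \leq \frac{1}{\ell}$; multiplying by $\frac{k-1}{2\pi}$ reproduces the bounds exactly.

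I do not anticipate any genuine obstacle in this lemma: it is essentially a bookkeeping statement packaging the explicit solution of a two-term exponential sum. The only small points to be careful about are choosing the fundamental domain for $\arg C$ so that $x_0$ lands in the prescribed half-open interval, and noting the unit-modulus of the character values. The real work in this section lies not here but in the subsequent Rouch\'e argument against the error term $\beta(z)$ defined in \eqref{eq:NormFwithErr}, where one must show $|h_\ell(z)| > |\beta(z)|$ on a suitable rectangular boundary surrounding $x_0 + iy_0$.
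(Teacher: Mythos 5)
Your proposal is correct and follows essentially the same route as the paper: reduce $h_\ell(z)=0$ to a single equation $\e(z)=C$, separate modulus and argument to get $x_0$ and $y_0$, and verify $y_0\in[y_{\ell+1},y_\ell]$ via the elementary logarithm inequality. Your expression $y_0=\frac{k-1}{2\pi}\log\left(1+\frac{1}{\ell}\right)$ agrees with the paper's $-\frac{k-1}{2\pi}\log\left(1-\frac{1}{\ell+1}\right)$ since $1-\frac{1}{\ell+1}=\frac{\ell}{\ell+1}$, and your $C/|C|=-\overline{\chi_2}(\ell)/\overline{\chi_2}(\ell+1)$ equals the paper's $-\overline{\chi_2}(\ell)\chi_2(\ell+1)$ because the character values are unimodular.
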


\begin{proof}[Proof of Lemma \ref{lem:ZeroH}]
Setting $h_{\ell}(z) = 0$, we find that
\begin{equation*}
    -\overline{\chi_2}(\ell) \chi_2(\ell + 1) \left( 1 - \frac{1}{\ell + 1} \right)^{k-1} = \e(x) \exp(-2 \pi y).
\end{equation*}
Then,
\begin{equation*}
    -\overline{\chi_2}(\ell) \chi_2(\ell + 1) = \e( x ) \quad \text{and} \quad \left( 1 - \frac{1}{\ell + 1} \right)^{k-1} = \exp(-2 \pi y).
\end{equation*}
Consequently, $x_0 \in \left( -\frac{1}{2}, \frac{1}{2} \right]$ is the unique solution to
\begin{equation}\label{eq:xnot}
    \e ( x_0 ) = -\overline{\chi_2}(\ell) \chi_2(\ell + 1)
\end{equation}
and
\begin{equation}\label{eq:ynot}
    y_0 = -\frac{(k-1)}{2 \pi} \log\left(1 - \frac{1}{\ell + 1}\right) = \frac{(k-1)}{2 \pi}\left| \log\left( 1 - \frac{1}{\ell + 1} \right) \right|.
\end{equation}
Using
\begin{equation*}
    -\frac{1}{\ell} \leq \log\left( 1 - \frac{1}{\ell + 1} \right) \leq - \frac{1}{\ell + 1},
\end{equation*}
we see that $y_0 \in (y_{\ell + 1}, y_{\ell})$, as desired.
\end{proof}

\subsection{Method to Prove Theorem \ref{thm:Fourier}}

We define 
\begin{equation*}
    N(y,k) = \frac{(2 \pi y)^k}{\Gamma(k)}
\end{equation*}
to be a natural normalization factor of $F(z)$. Make note that multiplying $F(z)$ by $N(y,k)$ will not affect the zeros of $F(z)$. Now, we explain the use of Rouch\'e's Theorem in the context of this section. We must show that the strict inequality
\begin{equation}\label{eq:FormalIneq}
    N(y,k)\left| \beta(z) \right| = N(y,k)\left| F(z) - h_{\ell}(z) \right| < N(y,k)\left| F(z) \right| + N(y,k) \left| h_{\ell}(z) \right|
\end{equation}
holds in the region $V_{\ell}$, where 
\begin{equation*}
    V_{\ell} = \left\{ z = x + iy : x_0 - \frac{1}{2} \leq x \leq x_0 + \frac{1}{2}, \; y_{\ell + 1} \leq y \leq y_{\ell} \right\}.
\end{equation*}
If \eqref{eq:FormalIneq} holds, then $F(z)$ will have the same number of zeros as $h_{\ell}(z)$ in the region $V_{\ell}$. On $\partial V_{\ell}$, we will show
\begin{equation}\label{eq:InformalIneq}
    N(y,k)\left| \beta(z) \right| < N(y,k)\left| h_{\ell}(z) \right|,
\end{equation}

which implies \eqref{eq:FormalIneq}. This leads us to our main theorem, which is the same as Theorem \ref{thm:1.2}.

\begin{theorem}\label{thm:Fourier}
The function $E_{\chi_1,\chi_2,k}(z)$ has exactly one zero in the region $V_{\ell}$.
\end{theorem}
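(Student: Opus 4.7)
The plan is to apply Rouch\'e's Theorem on the rectangle $V_\ell$ with $h_\ell$ as the main term and $\beta = F - h_\ell$ as the perturbation. Since $F$ and $E_{\chi_1,\chi_2,k}$ have identical zero sets and $h_\ell$ has exactly one zero in $V_\ell$ by Lemma~\ref{lem:ZeroH}, it suffices to verify \eqref{eq:InformalIneq} on $\partial V_\ell$, from which \eqref{eq:FormalIneq} follows. The normalization $N(y,k)$ is chosen so that, by Stirling's formula, $N(y,k)|f_n(y)|$ attains its peak value of order $\sqrt{k}/n$ precisely at $y=y_n$; this makes $f_\ell,f_{\ell+1}$ genuinely dominant on the strip $y_{\ell+1}\le y\le y_\ell$.

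First I would establish a lower bound for $|h_\ell(z)|$ on $\partial V_\ell$. Factoring $h_\ell(z) = \overline{\chi_2}(\ell)\ell^{k-1}\e(\ell z)\left[1+\alpha\,\e(z)\right]$ with $\alpha = \overline{\chi_2}(\ell+1)\chi_2(\ell)\left(\frac{\ell+1}{\ell}\right)^{k-1}$, the defining equation \eqref{eq:xnot} makes $\alpha\,\e(z)$ a positive real on the vertical sides $x = x_0 \pm \tfrac{1}{2}$, so $|1+\alpha\,\e(z)|\ge 1$ and $|h_\ell(z)| \ge \ell^{k-1}\exp(-2\pi\ell y)$. On the horizontal sides, a Taylor expansion of $(k-1)\log(1+1/\ell) - (k-1)/\ell$ and $(k-1)\log(1+1/\ell) - (k-1)/(\ell+1)$ shows that $|\alpha\,\e(z)| = \exp(\pm(k-1)/(2\ell^2) + O((k-1)/\ell^3))$, which, since $\ell\le\epsilon\sqrt{k}$ forces $(k-1)/\ell^2 \ge 1/\epsilon^2$, is bounded away from $1$ by a constant depending only on $\epsilon$. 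In either case $|1+\alpha\,\e(z)|$ is bounded below by an absolute constant, giving $|h_\ell(z)|\gg \max(\ell^{k-1}\exp(-2\pi\ell y),\,(\ell+1)^{k-1}\exp(-2\pi(\ell+1)y))$ on all of $\partial V_\ell$.

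Next I would bound $N(y,k)|\beta(z)|$ from above. Writing $n=\ell+j$, Stirling's formula and the expansion $\log(1+x)-x\sim -x^2/2$ give
\begin{equation*}
N(y,k)|f_{\ell+j}(y)| \ll \frac{\sqrt{k}}{\ell}\exp\!\left(-\frac{(k-1)j^2}{2\ell^2} + O\!\left(\frac{(k-1)|j|^3}{\ell^3}\right)\right)
\end{equation*}
uniformly for $y\in[y_{\ell+1},y_\ell]$, which controls the four explicit nearby terms $f_{\ell-2},f_{\ell-1},f_{\ell+2},f_{\ell+3}$. The tails $\varepsilon_1, \varepsilon_2$ are handled by a geometric-series argument: once $|j|\ge 4$ the ratio $|f_{n+1}(y)|/|f_n(y)| = (1+1/n)^{k-1}\exp(-2\pi y)$ is uniformly bounded away from $1$ on $[y_{\ell+1},y_\ell]$, so the tails collapse onto their leading terms. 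Finally, the divisor error $\delta$ from \eqref{eq:delta} has an inner factor $(b/n)^{k-1}\le 2^{-(k-1)}$ since $b\le n/2$, making $N(y,k)|\delta(z)|$ exponentially negligible. Combining, $N(y,k)|\beta(z)|$ is smaller than $N(y,k)|h_\ell(z)|$ by a factor $\exp(-c/\epsilon^2)$ on $\partial V_\ell$, and Rouch\'e delivers the theorem.

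The main obstacle is arranging the constants consistently: the lower bound for $|h_\ell|$ on the horizontal boundaries degrades precisely where one of the two summands shrinks relative to the other, so the Gaussian rate $\exp(-cj^2/\epsilon^2)$ in the error must dominate this degradation uniformly. This is what pins down the quantitative meaning of ``sufficiently small $\epsilon$'' in the hypothesis and forces one to track the implicit constants in the nearby-term estimates and in the lower bound simultaneously.
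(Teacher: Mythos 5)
Your proposal follows the same architecture as the paper's proof: Rouch\'e's theorem on $V_\ell$ with main term $h_\ell$ and perturbation $\beta=F-h_\ell$, a lower bound for $N(y,k)|h_\ell|$ on $\partial V_\ell$ obtained by noting that \eqref{eq:xnot} forces the two summands to interfere constructively on the vertical sides (the paper's $r_1+r_2\geq\max(r_1,r_2)$, minimized at $y_0$) while one summand sits at its peak $\asymp\sqrt{k}/\ell$ on each horizontal side, and a term-by-term upper bound on the error. One genuine difference: you dispose of the tails $\varepsilon_1,\varepsilon_2$ by a ratio test, observing that $|f_{n+1}(z)/f_n(z)|=(1+1/n)^{k-1}\exp(-2\pi y)$ is uniformly bounded away from $1$ once $n$ is a few steps from $\ell$; this is more elementary and self-contained than the paper's route through the incomplete gamma functions $P,Q$ and Temme's asymptotics imported from [RVY], at the cost of the clean bookkeeping the paper gets from the functions $M(t)$ and $R(t)$ of Lemma \ref{lem:Functions}.

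One intermediate claim is false as stated, though repairable. Your uniform bound $N(y,k)|f_{\ell+j}(z)|\ll\frac{\sqrt{k}}{\ell}\exp\bigl(-\frac{(k-1)j^2}{2\ell^2}+O(\cdot)\bigr)$ measures the Gaussian decay from $n=\ell$, but $y\mapsto N(y,k)|f_n(z)|$ peaks near $y_n$, so the relevant distance is from $n$ to the nearest of $\ell,\ell+1$. For $j=2$ at $y=y_{\ell+1}$ one actually has $N(y,k)|f_{\ell+2}(z)|\asymp\frac{\sqrt{k}}{\ell}\exp\bigl(-kR(\tfrac{1}{\ell+1})\bigr)$ with $R(t)\approx t^2/2$, i.e.\ decay rate about $\frac{k}{2(\ell+1)^2}$ rather than your $\frac{2k}{\ell^2}$; your stated bound is smaller than the true value. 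The proof survives because the correct rate $R(1/(\ell+1))\approx\frac{1}{2}t^2$ still exceeds the worst-case degradation of the main term, $M(1/(\ell+1))\approx\frac{1}{8}t^2$, at $y_0$ on the vertical sides --- this is precisely the inequality $M<R$ of Lemma \ref{lem:Functions} --- but the margin is $\exp(-\frac{3}{8}k t^2)$, not the larger gap your Gaussian heuristic suggests. Relatedly, the degradation of the lower bound on $|h_\ell|$ occurs on the \emph{vertical} sides near $y=y_0$, not on the horizontal sides, where $N(y,k)|h_\ell(z)|\gg\sqrt{k}/\ell$ at full strength.
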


To prove Theorem \ref{thm:Fourier}, we need the following:

\begin{lem}\label{lem:Functions}
Let $0 < t \leq \frac{1}{2}$, and define
\begin{align}
    M(t) &= \log(t) - \log(| \log(1-t) |) - \frac{\log(1-t) + t}{t} \label{eq:MT} \\ 
    R(t) &= t - \log(1 + t). \label{eq:RT}
\end{align}
Then,
\begin{equation*}
    0 < M(t) < R(t) < t^2.
\end{equation*}
\end{lem}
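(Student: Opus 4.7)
The plan is to reduce both $M(t)$ and $R(t)$ to values of a single convex function at different arguments, and then compare those arguments. Define $\varphi(x) = x - 1 - \log(x)$, which vanishes at $x=1$, is strictly increasing on $(1,\infty)$ (since $\varphi'(x) = 1 - 1/x$), and satisfies $\varphi \geq 0$ throughout its domain. Observe immediately that
\begin{equation*}
R(t) = t - \log(1+t) = (1+t) - 1 - \log(1+t) = \varphi(1+t).
\end{equation*}
For the expression $M(t)$, I would introduce the substitution $s = s(t) := -\log(1-t)/t$; note $s > 0$ since $\log(1-t) < 0$ for $t \in (0,1)$. Writing $|\log(1-t)| = st$ and $\log(1-t) = -st$, the definition of $M(t)$ collapses to
\begin{equation*}
M(t) = \log(t) - \log(st) - \frac{-st + t}{t} = -\log(s) + s - 1 = \varphi(s).
\end{equation*}

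With both quantities now expressed through $\varphi$, the lemma reduces to three inequalities: $1 < s < 1+t$ and $R(t) < t^2$. For $s > 1$, I would use the standard Taylor expansion $-\log(1-t) = \sum_{n \geq 1} t^n/n$, which gives $-\log(1-t) > t$ for $t > 0$, hence $s > 1$ and thus $M(t) = \varphi(s) > 0$. For $s < 1+t$, which is equivalent to $-\log(1-t) < t + t^2$, I would either use the same Taylor series (comparing $\sum_{n \geq 2} t^n/n$ against $t^2$ termwise for $t \leq 1/2$) or, more robustly, check that $g(t) := t + t^2 + \log(1-t)$ satisfies $g(0) = 0$ and $g'(t) = t(1-2t)/(1-t) \geq 0$ on $(0, 1/2]$. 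Since $\varphi$ is strictly increasing on $(1,\infty)$, the containment $1 < s < 1+t$ immediately yields $0 < M(t) < R(t)$.

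Finally, for $R(t) < t^2$, I would consider $h(t) := t^2 - R(t) = t^2 - t + \log(1+t)$ and compute $h'(t) = t(1+2t)/(1+t)$, which is positive on $(0, 1/2]$, together with $h(0) = 0$; alternatively the alternating series $R(t) = t^2/2 - t^3/3 + \cdots$ gives $R(t) < t^2/2 < t^2$ directly, since the terms decrease in absolute value for $t \leq 1/2$. No step is a serious obstacle here — the crux of the argument is recognizing the substitution $s = -\log(1-t)/t$ that rewrites $M(t)$ as $\varphi(s)$, after which the inequalities follow from standard monotonicity arguments on $\varphi$ and elementary estimates on $-\log(1-t)$.
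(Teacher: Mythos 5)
Your proof is correct and is essentially the paper's argument in a shifted variable: your $\varphi(1+u)$ is exactly $R(u)$ and your $s$ equals $1+w(t)$ for the paper's auxiliary series $w(t)=\tfrac12 t+\tfrac13 t^2+\cdots$, so writing $M(t)=\varphi(s)$ and sandwiching $1<s<1+t$ is the same as the paper's identity $M(t)=R(w(t))$ combined with $0<w(t)<t$ and the monotonicity of $R$. The remaining bound $R(t)<t^2$ is likewise handled by the same elementary power-series/derivative estimates as in the paper.
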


The proof of Lemma \ref{lem:Functions} is deferred until later in this section. We include the graph of $M(t)$ and $R(t)$ in Figure \ref{fig:Functions} for the reader's convenience.\\

\begin{figure}[h!]
	\begin{center}
	\includegraphics[width=0.6\linewidth]{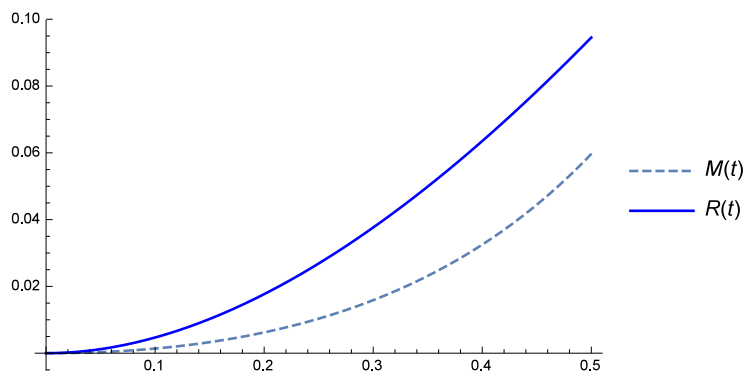}
	\caption{The Functions $M(t)$ and $R(t)$}	
	\label{fig:Functions}
	\end{center}
\end{figure}

Applying Lemma \ref{lem:Functions}, we will show:

\begin{lem}\label{lem:Main}
On $\partial V_{\ell}$,
\begin{equation}\label{eq:BoundonH}
    N(y,k)\left| h_{\ell}(z) \right| \gg \frac{\sqrt{k}}{\ell} \emph{exp}\left( -k \cdot M\left(\frac{1}{\ell + 1}\right) \right).
\end{equation}
\end{lem}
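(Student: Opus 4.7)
The plan is to bound $N(y,k)|h_\ell(z)|$ from below separately on each of the four edges of $\partial V_\ell$. The exponential factor $\exp(-kM(1/(\ell+1)))$ appears tightly only on the two vertical edges; the two horizontal edges will produce the strictly stronger bound $\sqrt{k}/\ell$.

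On the vertical edges $x = x_0 \pm \tfrac{1}{2}$, the defining equation \eqref{eq:xnot} for $x_0$ makes $f_{\ell+1}(z)/f_\ell(z) = \bigl(\tfrac{\ell+1}{\ell}\bigr)^{k-1} e^{-2\pi y}$ a positive real, so that $|h_\ell(z)| = |f_\ell(z)| + |f_{\ell+1}(z)|$ there. Stirling's approximation then gives
\[
N(y,k)|f_n(z)| \sim G_n(y) := \tfrac{1}{n}\sqrt{k/(2\pi)}\,(u_n e^{1-u_n})^k, \qquad u_n := 2\pi n y/k,
\]
reducing the task on vertical edges to minimizing $G_\ell(y) + G_{\ell+1}(y)$ over $y \in [y_{\ell+1}, y_\ell]$.

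The crux is an algebraic identity: setting $t := 1/(\ell+1)$ and $A := |\log(1-t)|/t$, one has $\ell = (1-t)/t$, $u_\ell(y_0) = (1-t)A$, and $u_{\ell+1}(y_0) = A$, and using the relation $tA = -\log(1-t)$ one finds $\log u + 1 - u = \log A + 1 - A$ for both values of $u$, which matches $-M(t)$ via a direct comparison with \eqref{eq:MT} after substituting $\log A = \log|\log(1-t)| - \log t$ and $1 - A = (\log(1-t)+t)/t$. This yields the pointwise bound $G_n(y_0) \sim (\sqrt{k}/n)\exp(-kM(t))$. To promote it to a uniform lower bound on the vertical edge, I would use $(\log G_n)'(y) = 2\pi n(1-u_n)/u_n$ and verify that $(G_\ell + G_{\ell+1})'(y_0) = 0$, which reduces to the clean identity $(1-t)(\ell+1) = \ell$; at the endpoints $y = y_{\ell\pm 1}$ one of the $G_n$'s is near its peak $\sim \sqrt{k}/\ell$ while the other is exponentially smaller, so both endpoint sums greatly exceed the target $(\sqrt{k}/\ell)\exp(-kM(t))$ when $\ell \leq \epsilon\sqrt{k}$ with $\epsilon$ sufficiently small, forcing the minimum to occur at $y_0$.

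On the horizontal edges, the identity $f_{\ell+1}(z_0)/f_\ell(z_0) = -1$ gives the factorization $h_\ell(z) = f_\ell(z)\bigl[1 - e^{2\pi i(x - x_0)}e^{-2\pi(y - y_0)}\bigr]$. On the top $y = y_\ell$, a Taylor expansion yields $\beta := 2\pi(y_\ell - y_0) = (k-1)[1/\ell - |\log(1-t)|] \gg k/\ell^2$, which is bounded below by a positive constant under $\ell \leq \epsilon\sqrt{k}$. Consequently $|1 - e^{i\alpha - \beta}|$ is bounded below uniformly in $\alpha$, and with $N(y_\ell,k)|f_\ell(z)| \sim \sqrt{k}/\ell$ from Stirling this gives $N(y,k)|h_\ell(z)| \gg \sqrt{k}/\ell \geq (\sqrt{k}/\ell)\exp(-kM(t))$; the bottom edge is handled symmetrically by factoring out $f_{\ell+1}(z)$ instead. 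The main obstacle will be executing the algebraic identity cleanly and verifying that $y_0$ is indeed the minimizer of $G_\ell + G_{\ell+1}$ on the vertical edges; once these are in hand, the remaining ingredients are routine applications of Stirling's formula and Taylor estimates.
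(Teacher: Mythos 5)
Your overall decomposition is the same as the paper's: on the vertical edges $x=x_0\pm\tfrac12$ the two terms add constructively so $|h_\ell|=|f_\ell|+|f_{\ell+1}|$, and the exponent identity you check (that $\log u+1-u$ takes the common value $-M(1/(\ell+1))$ at $u_\ell(y_0)=(1-t)A$ and $u_{\ell+1}(y_0)=A$ because $tA=|\log(1-t)|$) is correct and is exactly the computation the paper carries out by direct substitution of \eqref{eq:ynot}. Your treatment of the horizontal edges via the factorization $h_\ell=f_\ell\bigl[1-e^{2\pi i(x-x_0)}e^{-2\pi(y-y_0)}\bigr]$ with $2\pi(y_\ell-y_0)\gg k/\ell^2$ is equivalent to the paper's reverse-triangle-inequality argument and gives the stronger bound $\gg\sqrt{k}/\ell$ there.

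The one step that fails as stated is your proposed verification that $y_0$ minimizes $G_\ell+G_{\ell+1}$ on the vertical edge: the derivative of the sum does \emph{not} vanish at $y_0$. Writing $r_1(y)=y^k\ell^{k-1}e^{-2\pi\ell y}$ and $r_2(y)=y^k(\ell+1)^{k-1}e^{-2\pi(\ell+1)y}$, at the crossing point $y_0$ one has $r_1'/r_1=k/y_0-2\pi\ell$ and $r_2'/r_2=k/y_0-2\pi(\ell+1)$, so $(r_1+r_2)'(y_0)=r_1(y_0)\bigl(2k/y_0-2\pi(2\ell+1)\bigr)$, which vanishes only if $|\log(1-t)|=\tfrac{2k}{(k-1)(2\ell+1)}$; expanding, $2-t-2(1-t)A=t^2/3+O(t^3)>0$, so the derivative is strictly positive and the true critical point sits slightly below $y_0$. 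The identity $(1-t)(\ell+1)=\ell$ is true but does not deliver the vanishing you need, and "critical point at $y_0$ plus large endpoints" is the load-bearing step of your vertical-edge argument. The gap is easily repaired by the paper's device: bound $r_1+r_2\geq\max(r_1,r_2)$, note $r_1$ is increasing and $r_2$ is (essentially) decreasing on $[y_{\ell+1},y_\ell]$, so the maximum is minimized precisely at the crossing point $y_0$, whose value you have already computed to be $\asymp\frac{\sqrt{k}}{\ell}\exp\bigl(-kM(1/(\ell+1))\bigr)$ after normalizing by $N(y,k)$ and applying Stirling. With that substitution your argument goes through and coincides with the paper's proof.
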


\begin{lem}\label{lem:Error}
For all $z \in V_{\ell}$,
\begin{equation}\label{eq:BoundonErr}
    N(y,k)\left| \beta(z) \right| \ll \frac{\sqrt{k}}{2^k \ell} + \frac{\sqrt{k}}{\ell} \emph{exp}\left( -k \cdot R\left( \frac{1}{\ell + 1} \right) \right).
\end{equation}
\end{lem}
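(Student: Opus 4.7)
The plan is to split $\beta = f_{\ell-2}+f_{\ell-1}+f_{\ell+2}+f_{\ell+3}+\varepsilon_1+\varepsilon_2+\delta$ and bound each of the seven pieces uniformly on $V_\ell$. The first six should each be $\ll (\sqrt{k}/\ell)\exp(-kR(1/(\ell+1)))$, while only $\delta$ contributes the $\sqrt{k}/(2^k\ell)$ term. Everything hinges on a clean Stirling-normalized identity: if I write $m := (k-1)/(2\pi y) \in [\ell,\ell+1]$ and $g(s) := s e^{1-s}$, then
\begin{equation*}
    N(y,k)\,|f_n(z)| = \frac{\sqrt{k-1}}{\sqrt{2\pi}\,m}\,g(n/m)^{k-1}\bigl(1+O(1/k)\bigr).
\end{equation*}
A direct computation gives $\log g(1+t) = -R(t)$ and $\log g(1-t) = -S(t)$, where $S(t) := -t-\log(1-t) = \sum_{j\geq 2} t^j/j$; comparing the series for $S$ with $R(t) = \sum_{j\geq 2}(-1)^j t^j/j$ termwise yields $S(t) > R(t) > 0$ for $t>0$, and both are strictly increasing.

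Next, each of the four adjacent terms is handled by inserting the worst $m \in [\ell,\ell+1]$. For $f_{\ell+2}$, since $g$ decreases on $(1,\infty)$, the extremum occurs at $m = \ell+1$, yielding $g(1+1/(\ell+1))^{k-1} = \exp(-(k-1)R(1/(\ell+1)))$, which matches the target exactly. For $f_{\ell+3}$ the corresponding exponent is $(k-1)R(2/(\ell+1))$, strictly larger. For $f_{\ell-1}$ and $f_{\ell-2}$, the extremum occurs at $m = \ell$ with exponents $(k-1)S(1/\ell)$ and $(k-1)S(2/\ell)$; the chain $S(1/\ell) > R(1/\ell) > R(1/(\ell+1))$ (and analogously for $2/\ell$) gives the claim. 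For the tails, I would use the quadratic lower bounds $R(t), S(t) \geq t^2/6$ on $(0,1]$ to estimate the $n = \ell \pm j$ contribution by $(\sqrt{k}/\ell)\exp(-(k-1)j^2/(6\ell^2))$ for $j \geq 3$; because $\ell \leq \epsilon\sqrt{k}$ makes $(k-1)/\ell^2$ large, both sums are dominated by their $j=3$ terms, whose exponents dwarf $(k-1)R(1/(\ell+1)) \approx (k-1)/(2(\ell+1)^2)$ by a factor of at least three.

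For $\delta(z)$, I would factor the inner divisor sum by the uniform bound
\begin{equation*}
    \sum_{\substack{b \mid n \\ b < n}}\!\left(\frac{b}{n}\right)^{k-1} = \sum_{\substack{d \mid n \\ d \geq 2}} d^{-(k-1)} \leq \zeta(k-1) - 1 \leq 2^{2-k},
\end{equation*}
valid for $k \geq 4$, and pull it out of \eqref{eq:delta}. The remaining sum $\sum_{n\geq 1} N(y,k)\,n^{k-1}e^{-2\pi n y}$ equals, by the same Stirling identity, $(\sqrt{k-1}/(\sqrt{2\pi}m))\sum_{n\geq 1} g(n/m)^{k-1}\bigl(1+O(1/k)\bigr)$. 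Since $g(n/m)^{k-1}$, viewed as a function of integer $n$, is a Gaussian bump centered at $n = m$ with standard deviation $m/\sqrt{k-1} \leq (\ell+1)/\sqrt{k-1} \ll \epsilon$, the $n$-sum is $O(1)$. Multiplying, $N(y,k)\,|\delta(z)| \ll \sqrt{k}/(2^k\ell)$.

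The main obstacle is keeping every estimate uniform in $y \in [y_{\ell+1}, y_\ell]$, in $n \geq 1$, and in $q_1, q_2$. The most delicate piece is the $\varepsilon_1$ tail near $n = 1$, where $n/m$ is small rather than close to $1$ and the quadratic approximation to $\log g$ becomes inaccurate. There I would revert to the direct estimate $g(s)^{k-1} \leq (es)^{k-1}$, whose exponent $(k-1)(1+\log(n/m))$ is extremely negative when $n \ll m$ and provides vastly more decay than needed. This special-casing of small $n$ is the only real subtlety in an otherwise bookkeeping-heavy but conceptually clean argument.
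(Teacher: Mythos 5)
Your decomposition of $\beta$ and your treatment of the four adjacent terms and of $\delta(z)$ match the paper's proof in substance: the paper likewise substitutes $k-1=2\pi y(\ell\pm 1)$ at the worst boundary, applies Stirling, and identifies the exponents $-k\,R(\tfrac{1}{\ell+1})$ (your function $S$ is implicit in the paper's ``proceed in the same fashion'' for $f_{\ell-1}$), and it bounds $\delta$ via exactly your divisor-sum estimate $\zeta(k-1)-1\ll 2^{-k}$ times $N(y,k)\sum_n|f_n|\ll \sqrt{k}/\ell$. The genuine divergence is in the tails $\varepsilon_1,\varepsilon_2$: the paper imports asymptotics for the normalized incomplete gamma functions $P(k,x)$ and $Q(k,x)$ from [RVY] and [T] to get $\ll\exp(-k/(\ell+1)^2)$, whereas you sum $g(n/m)^{k-1}$ directly and argue that, since $(k-1)/\ell^2\gg \epsilon^{-2}$, consecutive terms decay geometrically and the first term of each tail dominates. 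Your route is more elementary and self-contained (no appeal to Temme's expansion), at the cost of more bookkeeping; the paper's is shorter but relies on an external black box. Your unified identity $N(y,k)|f_n|=\frac{\sqrt{k-1}}{\sqrt{2\pi}\,m}g(n/m)^{k-1}(1+O(1/k))$ is a clean packaging the paper does not state. One small repair: the lower bound $R(t)\geq t^2/6$ fails once $t\gtrsim 3.5$ (since $R(t)\sim t$ for large $t$), so for the far tail of $\varepsilon_2$, where $n/m-1>1$, you should use $R(t)\gg\min(t^2,t)$; the sum still converges geometrically and is dominated by its first term, so the conclusion is unaffected. Also note that the quadratic lower bound for $S$ holds on all of $(0,1)$ since $S(t)=\sum_{j\geq 2}t^j/j\geq t^2/2$, so the special-casing of small $n$ in $\varepsilon_1$ that you flag as delicate is in fact unnecessary.
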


Theorem \ref{thm:Fourier} will follow from Lemmas \ref{lem:ZeroH}, \ref{lem:Functions}, \ref{lem:Main}, and \ref{lem:Error}.

\subsection{Proofs of Lemmas \ref{lem:Main} and \ref{lem:Error}}

Before beginning the proofs of Lemma \ref{lem:Main} and \ref{lem:Error}, we must provide two facts that will aid us. If $0< u < 1$, then
\begin{equation}\label{eq:Fact1}
    u - \frac{u^2}{2} \leq \log\left(1 + u\right) \leq u - \frac{u^2}{4}.
\end{equation}
Additionally, recall Stirling's approximation: For $n \in \mathbb{Z}^+$,
\begin{equation}\label{eq:Stirling}
    \Gamma(n) \sim \sqrt{\frac{2 \pi}{n}} \Bigg( \frac{n}{e} \Bigg)^n.
\end{equation}

\begin{proof}[Proof of Lemma \ref{lem:Main}]
Consider two cases:
\vskip 0.1 in
\textbf{Top and Bottom Boundaries:} Let $y = y_{\ell}$. Note that, from the definition of $h_{\ell}(z)$ and the triangle inequality,
\begin{equation*}
    N(y,k)\left| h_{\ell}(z) \right| \geq \frac{(2 \pi y \ell)^k}{\Gamma(k)} \frac{ \exp(-2\pi \ell y)}{\ell} \cdot \Bigg| 1 - \bigg| \frac{f_{\ell + 1}(z)}{f_{\ell}(z)} \bigg| \Bigg|.
\end{equation*}
In the next steps, we substitute $k - 1 = 2\pi y \ell$ and apply \eqref{eq:Stirling}, which implies
\begin{equation*}
    \frac{e}{\ell} \left(\frac{k-1}{e}\right)^k \frac{1}{\Gamma(k)} \gg \frac{\sqrt{k}}{\ell}.
\end{equation*}
Furthermore, we get an upper bound on $\left| \frac{f_{\ell + 1}(z)}{f_{\ell}(z)} \right|$ at $y = y_{\ell}$ using \eqref{eq:Fact1}, namely
\begin{equation}\label{eq:ExpBound}
    \left| \frac{f_{\ell + 1}(z)}{f_{\ell}(z)} \right| = \left( 1 + \frac{1}{\ell} \right)^{k-1}\exp\left(-\frac{k-1}{\ell} \right) \ll \exp\left(- \frac{k}{4\ell^2} \right).
\end{equation}
Recall that $\ell \leq \epsilon\sqrt{k}$ for a sufficiently small $\epsilon > 0$, so \eqref{eq:ExpBound} is less than $\frac{1}{2}$ (say). Then,
\begin{equation}\label{eq:BoundonHforY}
    N(y,k)|h_{\ell}(z)| \gg \frac{\sqrt{k}}{\ell},
\end{equation}
for $y = y_{\ell}$. Letting $y = y_{\ell + 1}$, by similar methods, we conclude that \eqref{eq:BoundonHforY} holds.

\vskip 0.1 in
\textbf{Left and Right Boundaries:} Let $x = x_0 \pm \frac{1}{2}$. Then, let $r_1(y)$ and $r_2(y)$ be the magnitudes of the two terms in $N(y,k)h_{\ell}(z)$:
\begin{equation*}
    \begin{aligned}
    r_1(y) &= y^k \ell^{k-1} \exp(-2\pi \ell y) \\
    r_2(y) &= y^k(\ell + 1)^{k-1} \exp(-2 \pi (\ell + 1)y).
    \end{aligned}
\end{equation*}
Note that, when $x = x_0 \pm \frac{1}{2}$, we have that
\begin{align*}
N(y,k)\left| h_{\ell}(z) \right| = \frac{(2\pi)^k}{\Gamma(k)}\left(r_1(y) + r_2(y)\right).
\end{align*}
We additionally see that $r_1(y) + r_2(y) \geq \text{max}(r_1(y),r_2(y))$. By elementary calculus, we note that $r_1$ is strictly increasing on $y_{\ell + 1} \leq y \leq y_{\ell}$. Furthermore, $r_2$ is strictly decreasing for the same range of $y$ values. Thus, $\text{max}(r_1(y),r_2(y))$ is minimized at $y_0$, and $r_1 = r_2$ at $y_0$. Hence, if $x = x_0 \pm \frac{1}{2}$, recall \eqref{eq:ynot} and Lemma \ref{lem:Functions} to gain the following:
\begin{align*}
    N(y,k)\left| h_{\ell}(z) \right| &\gg \frac{(2\pi y_0)^k}{\Gamma(k)} (\ell + 1)^{k-1} \exp(-2\pi (\ell + 1)y_0) \\
    &= \frac{ (k-1)^k (\ell + 1)^k}{\Gamma(k)(\ell + 1)} \left|\log\left(1 - \frac{1}{\ell + 1}\right)\right|^k \exp (-(k-1)) \; \cdot \\
    & \qquad \qquad \qquad \exp\left( (k-1)(\ell + 1) \left( \log\left( 1 - \frac{1}{\ell + 1} \right) + \frac{1}{\ell + 1} \right) \right) \\
    &= \frac{e}{\ell + 1} \left( \frac{k-1}{e} \right)^k \frac{1}{\Gamma(k)}\exp\left( -k \cdot M\left( \frac{1}{\ell + 1} \right)-(\ell+1)\left(\log\left(1-\frac{1}{\ell+1}\right)+\frac{1}{\ell+1}\right)\right)\\
    &\gg\left( \frac{k-1}{e} \right)^k \frac{1}{\Gamma(k)}\exp\left( -k \cdot M\left( \frac{1}{\ell + 1} \right)\right).
\end{align*}
Using \eqref{eq:Stirling}, we derive \eqref{eq:BoundonH} in this case.\qedhere

\end{proof}

It will be helpful to note that a minor modification of the proof of Lemma \ref{lem:Main} gives
\begin{equation}\label{eq:UpperonH}
N(y,k) \left| h_{\ell}(z) \right| \ll \frac{\sqrt{k}}{\ell},
\end{equation}
in $V_{\ell}$ as well.

\begin{proof}[Proof of Lemma \ref{lem:Error}]
To begin, we note that $\left| \beta(z) \right| \leq \left| f_{\ell + 2}(z) \right| + \left| f_{\ell - 1}(z) \right| + \left| \varepsilon_1(z) \right| + \left| \varepsilon_2(z) \right| + \left| \delta(z) \right|$. Now, we break the proof into three parts:
\vskip 0.1 in
\textbf{Part 1:} Consider $f_{\ell+2}(z)$ and $f_{\ell - 1}(z)$. In the region $V_{\ell}$, $N(y,k)\left|f_{\ell + 2}(z)\right|$ has the greatest magnitude when $y = y_{\ell + 1}$. Because of this, we use the substitution $y_{\ell + 1} = \frac{k-1}{2 \pi (\ell + 1)}$ and Lemma \ref{lem:Functions} to obtain:
\begin{align*}
    N(y,k)\lvert f_{\ell+2}(z) \rvert &\leq \frac{(k-1)^k}{\Gamma(k) (\ell + 2)} \exp(-(k-1)) \left( 1 + \frac{1}{\ell + 1} \right)^k \exp\left( - \frac{k-1}{\ell + 1} \right) \\
    &= \frac{e}{\ell + 2} \left( \frac{k-1}{e} \right)^k \frac{1}{\Gamma(k)} \exp\left( -k \cdot R\left( \frac{1}{\ell + 1} \right) \right)\exp\left(\frac{1}{\ell+1}\right)\\
    &\ll\left( \frac{k-1}{e} \right)^k \frac{1}{\Gamma(k)} \exp\left( -k \cdot R\left( \frac{1}{\ell + 1} \right)\right).
\end{align*} 
Then, using \eqref{eq:Stirling}, we find that
\begin{equation}\label{eq:FunctionBounds}
    N(y,k)\lvert f_{\ell+2}(z) \rvert \ll \frac{\sqrt{k}}{\ell} \exp\left( -k \cdot R\left( \frac{1}{\ell + 1} \right) \right).
\end{equation}
For $N(y,k)\left|f_{\ell - 1}(z)\right|$, the function has the greatest magnitude when $y = y_{\ell}$ in the region $V_{\ell}$. Then, we proceed in the same fashion as before to achieve the bound in \eqref{eq:FunctionBounds} for $N(y,k)\left|f_{\ell - 1}(z)\right|$. We find that the upper bound on $N(y,k)|f_{\ell+3}(z)|$ is no larger than the upper bound on $N(y,k)|f_{\ell+2}(z)|$ for $y$ in the region \eqref{eq:YBounds}. This is similarly true for $N(y,k)|f_{\ell-2}(z)|$ and $N(y,k)|f_{\ell-1}(z)|$, respectively.

\vskip 0.1 in
\textbf{Part 2:} Let $\varepsilon_1(z)$ and $\varepsilon_2(z)$ be defined as in Section \ref{sec:MainTerm}. By [RVY, p.18],
\begin{equation*}
    N(y,k)\left| \varepsilon_2(z) \right| \ll Q(k,2\pi (\ell + 3) y),
\end{equation*}
and
\begin{equation*}
    N(y,k)\left| \varepsilon_1(z) \right| \ll P(k,2\pi (\ell - 2) y),
\end{equation*}
where $Q(s,x)$ is the normalized incomplete gamma function and $P(s,x)$ is the complementary incomplete gamma function, defined by:
\begin{equation*}
    Q(s,x) = \frac{1}{\Gamma(s)} \int_x^{\infty} t^s e^{-t} \frac{dt}{t}, \qquad P(s,x) = \frac{1}{\Gamma(s)} \int_0^x t^s e^{-t} \frac{dt}{t}.
\end{equation*}
Using the results of [T], [RVY] derived
\begin{equation}\label{eq:Temme}
    Q(s,x) \ll \exp\left( -\frac{(x-s)^2}{4s} \right) + \exp\left( -\frac{\left| x-s \right|}{4} \right) .
\end{equation}
Note that $P(s,x)$ is bounded above by \eqref{eq:Temme} as well. Letting $s = k$ and $x = 2\pi (\ell + 3)y$, and using the inequality $\frac{k-1}{\ell + 1} \leq 2\pi y \leq \frac{k-1}{\ell}$, we have
\begin{equation*}
    x - s = 2\pi (\ell + 3) y - k \geq \frac{2k}{\ell + 1} + O(1).
\end{equation*}
Thus, with \eqref{eq:Temme}, we derive
\begin{equation*}
    Q(k,2\pi (\ell + 3)y) \ll \exp\left( -\frac{k}{(\ell+1)^2} \right).
\end{equation*}
The same bound holds for $P(k, 2\pi (\ell - 2)y)$ as well. Since $R(t) < t^2$, the bounds on $N(y,k)|\varepsilon_1(z)|$ and $N(y,k)|\varepsilon_2(z)|$ are consistent with \eqref{eq:BoundonErr}.
\vskip 0.1 in
\textbf{Part 3:} Recall $\delta(z)$ satisfies \eqref{eq:delta}. Note that
\begin{equation}\label{eq:NestedSum}
    \sum_{\substack{b \mid n \\ b < n}} \left( \frac{b}{n} \right)^{k-1} \leq \sum_{d > 1} \frac{1}{d^{k-1}} = \zeta(k-1) - 1 \ll \frac{1}{2^k}.
\end{equation}
The method of proof given in \eqref{eq:UpperonH}, Part 1, and Part 2, gives us the following bound in $V_{\ell}$:
\begin{equation}\label{eq:FourierBound}
    N(y,k)\sum_{n=1}^{\infty} \left| f_n(z) \right| \ll \frac{\sqrt{k}}{\ell}.
\end{equation}
(Note that [RVY] utilizes the triangle inequality.) Thus, from \eqref{eq:FourierBound} and \eqref{eq:NestedSum}, we find that
\begin{equation*}
    N(y,k)\left| \delta(z) \right| \ll \frac{\sqrt{k}}{2^k \ell}. \qedhere
\end{equation*}
\end{proof}

\begin{proof}[Proof of Lemma \ref{lem:Functions}]
Let $0 < t \leq \frac{1}{2}$. By taking the derivative of $R(t)$, we find that $R(t)$ is strictly increasing on its domain. Using its power series expansion, we see that $R(t) \leq \frac{1}{2} t^2 < t^2$. To show that $M(t) < R(t)$, we define $w(t)$ to be the infinite sum $w(t) = \frac{1}{2}t + \frac{1}{3} t^2 + \frac{1}{4}t^3 + \cdots$. Then, $M(t) = R(w(t)) = w(t) - \log(1 + w(t))$, and our proof will be finalized as long as $w(t) < t$ by the monotonicity of $R(t)$. Note that, for $t<1$,
\begin{equation*}
    w(t) \leq \frac{1}{2}t + \frac{1}{3} \left( \frac{t^2}{1 - t} \right) < t. \qedhere
\end{equation*}

\end{proof}

\section{Inequivalence of Zeros at Atkin-Lehner Cusps}\label{AL}
In order to show inequivalence of zeros around certain cusps, we recall the theory of Atkin-Lehner involutions. We write $N=q_1 q_2=QR$ such that $(Q,R)=1$. An Atkin-Lehner involution $W_Q^N$ is given by
\begin{align*}
W_Q^N=\begin{pmatrix} Qa & b\\Nc &Qd\end{pmatrix},
\end{align*}
where $a\equiv 1\mod{R}$, $b\equiv 1\mod{Q}$ and $\det W_Q^N =Q$.\\

Let $\chi_1'$ and $\chi_2'$ be primitive characters modulo $q_1'$ and $q_2'$, respectively. We define $q_1=(q_1',R)\cdot(q_2',Q)$ and $q_2=(q_1',Q)\cdot(q_2',R)$. Factor each character uniquely as $\chi_1'=\chi_1'^{(Q)}\chi_1'^{(R)}$ and $\chi_2'=\chi_2'^{(Q)}\chi_2'^{(R)}$, where $\chi_i'^{(Q)}$ has modulus $(q_i',Q)$, and similarly for $R$. We define $\chi_1 =\chi_1'^{(R)}\chi_2'^{(Q)}$ and $\chi_2 =\chi_1'^{(Q)}\chi_2'^{(R)}$, and note that $\chi_i$ is a primitive character modulo $q_i$.\\

Weisinger showed in [W] that
\begin{align*}
E_{\chi_1',\chi_2',k}\Big|_{W_Q^N}(z)=c_Q E_{\chi_1,\chi_2,k}(z),
\end{align*}
for some constant $c_Q$. This allows one to study the behavior of $E_{\chi_1,\chi_2,k}(z)$ around the cusp at infinity in terms of the series $E_{\chi_1',\chi_2',k}(z)$ around the cusp at $\frac{a}{Rc}$. We will use this theory in the coming proposition.\\

Let $\mathcal{K}(q_2)$ denote the region 
\begin{align*}
\mathcal{K}(q_2)=\left\{z=x+iy\in\mathbb{H}: y>\frac{1}{q_2},\ -\frac{1}{2}<x\leq\frac{1}{2}\right\}.
\end{align*}
\begin{prop} For an Atkin-Lehner involution $W_Q^N$, where $Q\neq 1$, the image of $\mathcal{K}(q_2')$ under the involution $W_Q^N$ is disjoint with the region $\mathcal{K}(q_2)$.
\end{prop}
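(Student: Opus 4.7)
The plan is to use the standard transformation formula for the imaginary part under the Möbius action,
$$\Imag(W_Q^N(z)) = \frac{Q\,\Imag(z)}{|Ncz+Qd|^2},$$
together with a crude lower bound on the denominator, to force $\Imag(W_Q^N(z)) < 1/q_2$ for every $z \in \mathcal{K}(q_2')$. This automatically places the image outside $\mathcal{K}(q_2)$, which is defined only by an imaginary-part constraint. The first step is to observe that $c \neq 0$: if $c = 0$, then the determinant condition $\det W_Q^N = Q^2 ad = Q$ forces $Qad = 1$, which is impossible for the integer $Q > 1$. Hence $|c| \geq 1$, so with $y = \Imag(z)$,
$$|Ncz + Qd|^2 \geq (Nc)^2 y^2 \geq N^2 y^2,$$
and combining the two estimates gives $\Imag(W_Q^N(z)) \leq Q/(N^2 y)$.

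Since $z \in \mathcal{K}(q_2')$ provides $y > 1/q_2'$, it now suffices to verify the arithmetic inequality $Qq_2 q_2' \leq N^2$. This reduces to a short bookkeeping check using $N = q_1 q_2 = q_1' q_2'$ and the definitions of $q_1, q_2$ in terms of the original moduli and Atkin--Lehner factorization. Factoring $q_i' = (q_i',Q)(q_i',R)$ (permissible because $\gcd(Q,R) = 1$ and $q_i' \mid N$), and noting that $Q = (q_1',Q)(q_2',Q)$ for the same reason, one obtains the clean identity
$$\frac{N^2}{Qq_2q_2'} = \frac{q_1 q_1'}{Q} = \frac{(q_1',R)(q_2',Q)\cdot(q_1',Q)(q_1',R)}{(q_1',Q)(q_2',Q)} = (q_1',R)^2 \geq 1.$$
Consequently $Qq_2/N^2 \leq 1/q_2' < y$, which yields the strict inequality $\Imag(W_Q^N(z)) < 1/q_2$, so $W_Q^N(z) \notin \mathcal{K}(q_2)$ and disjointness follows.

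The main obstacle here is not analytic but combinatorial: correctly tracking the gcd factors in the identity $q_1 q_1' = Q(q_1',R)^2$. Once this is in hand, the geometric content of the proposition reduces to the trivial bound $|Ncz+Qd|^2 \geq N^2 y^2$ from the nonvanishing of $c$, and the rest is a one-line application of the standard imaginary-part formula. I do not anticipate any further subtlety, since the inequality comes out with room to spare whenever $(q_1', R) \geq 1$, which is automatic.
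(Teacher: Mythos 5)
Your proposal is correct and follows essentially the same route as the paper: the imaginary-part transformation formula, the bound $|Ncz+Qd|^2\geq N^2c^2y^2$ with $c\neq 0$, and the reduction to the arithmetic inequality $Qq_2q_2'\leq N^2$. The only cosmetic difference is that you evaluate the ratio exactly as $N^2/(Qq_2q_2')=(q_1',R)^2$, whereas the paper bounds the equivalent quantity $q_2q_2'/(RN)$ by $1$ using $(q_1',Q)\leq q_1'$ and $(q_2',R)\leq R$; both verifications are valid.
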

\begin{proof}
We briefly state that we cannot have $c=0$, since that would imply $Q=1$. Let $z=x+iy\in\mathcal{K}(q_2')$. Then
\begin{align*}
\Imag \left(W_Q^N z\right)&=\frac{Qy}{(Ncx+Qd)^2+(Ncy)^2}\leq\frac{Qy}{N^2 c^2 y^2}\\
&=\frac{1}{Rc^2 Ny} <\frac{q_2'}{Rc^2 N}\leq\frac{q_2'}{RN}.
\end{align*}
Note that
\begin{align*}
\frac{q_2'}{RN}\leq\frac{1}{q_2},
\end{align*}

since
\begin{align*}
\frac{q_2'q_2}{RN}=\frac{q_2}{q_1'R}=\frac{(q_1',Q)}{q_1'}\cdot\frac{(q_2',R)}{R}\leq 1.
\end{align*}
Therefore $\Imag(W_Q^N z)\not\in\mathcal{K}(q_2)$.
\end{proof}

\begin{cor} The zeros of $E_{\chi_1,\chi_2,k}(z)$ in $\mathcal{K}(q_2)$ are $\Gamma_0(q_1q_2)$-inequivalent to the image of the zeros of $E_{\chi_1',\chi_2',k}(z)$ under the Atkin-Lehner involution $W_Q^N$.
\end{cor}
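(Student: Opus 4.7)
The plan is to start with a zero $z_1 \in \mathcal{K}(q_2)$ of $E_{\chi_1,\chi_2,k}$ and a zero $z_0' \in \mathcal{K}(q_2')$ of $E_{\chi_1',\chi_2',k}$, and show that $z_1$ and $z_2 := W_Q^N z_0'$ lie in distinct $\Gamma_0(q_1 q_2)$-orbits. (By Weisinger's relation, $z_2$ is, up to a non-vanishing factor from the automorphy cocycle, the point corresponding to a zero of $E_{\chi_1,\chi_2,k}$, so this is the natural comparison.) The preceding Proposition already yields $\mathrm{Im}(z_2) < 1/q_2 < \mathrm{Im}(z_1)$, so in particular $z_1 \neq z_2$ as points in $\mathbb{H}$ and any putative equivalence must use a non-trivial element of $\Gamma_0(q_1 q_2)$.

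Suppose for contradiction that $\gamma z_1 = z_2$ with $\gamma = \bigl(\begin{smallmatrix} a & b \\ c & d \end{smallmatrix}\bigr) \in \Gamma_0(q_1 q_2)$. If $c=0$, then $\gamma$ is a translation and preserves imaginary parts, immediately contradicting $\mathrm{Im}(z_1) > \mathrm{Im}(z_2)$. So $c \neq 0$ and $|c| \geq q_1 q_2$. I would then exploit the fact that $W_Q^N$ normalizes $\Gamma_0(q_1 q_2)$ (up to scalar) to rewrite $\gamma z_1 = W_Q^N z_0'$ as $\gamma^* (W_Q^N)^{-1} z_1 = z_0'$ for some $\gamma^* \in \Gamma_0(q_1 q_2)$. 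The matrix $(W_Q^N)^{-1}$, after clearing the $1/Q$ scalar, takes the form $\bigl(\begin{smallmatrix} Qd & -b \\ -Nc & Qa \end{smallmatrix}\bigr)$, which is again of Atkin-Lehner type but now with the primed and unprimed data interchanged. Running the same calculation as in the preceding Proposition then yields the symmetric bound $\mathrm{Im}((W_Q^N)^{-1} z_1) \leq q_2/(RN) \leq 1/q_2'$, using the arithmetic identity $q_2 q_2'/(RN) \leq 1$ proved there.

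With this bound in place, the equation $\gamma^* (W_Q^N)^{-1} z_1 = z_0'$ would require $\gamma^*$ to strictly increase the imaginary part from a value at most $q_2/(RN)$ up past $1/q_2'$. Applying the transformation law $\mathrm{Im}(\gamma^* w) = \mathrm{Im}(w)/|c^* w + d^*|^2$ exactly as in the proof of Proposition~\ref{ineqzeros} will, I expect, force the bottom-left entry $c^*$ to vanish and $\gamma^*$ to reduce to a translation, which is then incompatible with the strict increase in imaginary parts. The main obstacle is precisely this final quantitative step: pinning down carefully that the combination $|c^*| \geq q_1' q_2'$, the upper bound $\mathrm{Im}((W_Q^N)^{-1} z_1) \leq q_2/(RN)$, and the arithmetic inequality $q_2 q_2' \leq RN$ really do force $c^* = 0$. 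This is where all the structural information about the Atkin-Lehner normalizer has to line up, and it is the only part of the argument that does not reduce to a direct invocation of the preceding Proposition together with Proposition~\ref{ineqzeros}.
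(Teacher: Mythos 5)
You are right that mere disjointness of $W_Q^N\mathcal{K}(q_2')$ and $\mathcal{K}(q_2)$ does not give $\Gamma_0(q_1q_2)$-inequivalence, and your idea of pushing a putative equivalence through the normalizer is the correct one; this is precisely the point the paper leaves implicit. However, the step you yourself flag as the main obstacle is a genuine gap, and in the form you set it up it cannot be closed. From $\gamma^*(W_Q^N)^{-1}z_1=z_0'$, the transformation law gives $|c^*w+d^*|^2=\Imag(w)/\Imag(z_0')$ with $w=(W_Q^N)^{-1}z_1$, hence only $|c^*|<1/\Imag(w)$. But $\Imag(w)\le q_2/(RN)$ is \emph{small} (typically well below $1/N$, and with no useful lower bound), so this is far weaker than the $|c^*|<q_1'q_2'$ you need to force $c^*=0$. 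The mechanism of Proposition~\ref{ineqzeros} requires both points to sit high in the strip; here the source $w$ is low and the target $z_0'$ is high, so the inequality points the wrong way and $c^*$ is not forced to vanish.

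The repair is to never separate $\gamma$ (or $\gamma^*$) from the Atkin--Lehner matrix. The computation in the preceding Proposition uses only that the matrix is integral of determinant $Q$, with upper-left and lower-right entries divisible by $Q$ and lower-left entry divisible by $N$ and nonzero (the congruences $a\equiv1\Mod{R}$, $b\equiv1\Mod{Q}$ are never used). A direct multiplication shows that every product $\gamma W_Q^N$ with $\gamma\in\Gamma_0(q_1q_2)$ has exactly this shape, and its lower-left entry cannot vanish unless $Q=1$, by the same determinant argument. Hence the Proposition's estimate applies verbatim to the whole product and gives $\Imag\left(\gamma W_Q^N z_0'\right)\le q_2'/(RN)\le 1/q_2$ uniformly in $\gamma\in\Gamma_0(q_1q_2)$, so the entire orbit of $W_Q^Nz_0'$ misses $\mathcal{K}(q_2)$; this is the corollary. (Your reversed direction works the same way once you apply the estimate to the full product $\gamma^*(W_Q^N)^{-1}$, which again lies in the coset, together with the symmetric inequality $q_2q_2'\le RN$ already established in the Proposition.)
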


\begin{section}*{Acknowledgements}
The authors would like to thank Dr. Matthew Young for his immense support and guidance. Additionally, the authors are grateful to Nathan Green for his advice, Texas A\&M University Department of Mathematics, and the National Science Foundation for funding the Texas A\&M REU, grant DMS--1460766.
\end{section}

\end{document}